\documentclass[11pt]{amsart}
\usepackage{amssymb}
\usepackage{amsmath}
\setcounter{tocdepth}{1}

\usepackage{color}
\usepackage{graphicx}
\usepackage{epstopdf}

\newtheorem{thm}{Theorem}[section]
\newtheorem{lemma}[thm]{Lemma}
\newtheorem{cor}[thm]{Corollary}
\newtheorem{prop}[thm]{Proposition}
\newtheorem{con}[thm]{Conjecture}

\theoremstyle{definition}

\newtheorem{defn}[thm]{Definition}

\theoremstyle{remark}
\newtheorem{remark}[thm]{Remark}

\theoremstyle{rem}

\newcommand{\sgn}{{\rm sgn}}

\newcommand\bq{\begin{equation}}
\newcommand\eq{\end{equation}}
\newcommand\beq{\begin{eqnarray*}}
\newcommand\eeq{\end{eqnarray*}}
\newcommand\ben{\begin{enumerate}}
\newcommand\een{\end{enumerate}}
\newcommand\bit{\begin{itemize}}
\newcommand\eit{\end{itemize}}

\newcommand\A{{\rm A}}
\newcommand\B{{\rm B}}

\newcommand\des{{\rm des}}

\newcommand\Des{{\rm DES}}

\newcommand\bubA{{\rm \mathsf{bubble}_A}}
\newcommand\bubB{{\rm \mathsf{bubble}_B}}
\newcommand\bscB{{\rm bsc_B}}
\newcommand\bscA{{\rm bsc_A}}
\newcommand\id{{\rm id}}
\newcommand\mdA{{\rm maxdrop_A}}
\newcommand\mdB{{\rm maxdrop_B}}
\newcommand\st{{\rm st}}
\newcommand\sst{{\rm sst}}
\newcommand\lt{{\rm ltime}}

\keywords{Type B Coxeter group, Signed permutations, Bubble sort, Permutation statistics, Descent number, Maximum drop, Eulerian polynomials, Juggling sequences}

\begin{document}

\title[Des. poly. for $k$ bubble-sortable permutations of type B]{Descent polynomials for $k$ bubble-sortable permutations of type B}
\date{May 3, 2012}
\author[M. Hyatt]{Matthew Hyatt}
\address{Department of Mathematics, University of California, San Diego, La Jolla, CA 92093}
\email{mdhyatt@math.ucsd.edu}

\begin{abstract}
Motivated by the work of Chung, Claesson, Dukes, and Graham in \cite{ccdg}, we define a natural type B analog of the classic bubble sort, and use it to define a type B analog of the maximum drop statistic. We enumerate (by explicit, recursive, and generating function formulas) signed permutations with $r$ type B descents and type B maximum drop at most $k$. We also find a connection between these signed permutations and certain 2-colored juggling sequences.
\end{abstract}

\maketitle

\vbox{\tableofcontents}

\section{Introduction}\label{section intro}

Let $S_n$ denote the group of permutations on the set $[n]:=\{1,2,...,n\}$. An adjacent transposition $\tau_i\in S_n$, is the permutation $(i,i+1)$ written in cycle notation.
Given any permutation $\pi\in S_n$ written in one-line notation, i.e. $\pi=\pi(1),\pi(2),\dots,\pi(n)$, multiplying $\pi$ by $\tau_i$ on the right has the effect of switching $\pi(i)$ and $\pi(i+1)$. For example if
\[\pi=3,4,1,5,2\text{, then }\pi\tau_2=3,1,4,5,2.\]
Any permutation can be expressed as a product of adjacent transpositions, so the set $\{\tau_1,\tau_2,\dots,\tau_{n-1}\}$ generates $S_n$. Moreover, $S_n$ is a Coxeter group and is also called the type A Coxeter group (see also \cite{bb}). Given $\pi\in S_n$, if $\pi=\tau_{i_1}\tau_{i_2}\cdots \tau_{i_k}$ and $k$ is minimal among all such expressions, then $k$ is called the Coxeter length of $\pi$ and we write $l(\pi)=k$.

The (right) descent set of $\pi\in S_n$, denoted $\Des_\A(\pi)$, is defined by
\[\Des_\A(\pi):=\left\{i\in [n-1]: l(\pi\tau_i)<l(\pi)\right\}.\]
It is well known that the descent set is also given by
\[\Des_\A(\pi)=\left\{i\in [n-1]: \pi(i)>\pi(i+1)\right\}.\]
The \textit{type A descent number} of a permutation $\pi\in S_n$, denoted $\des_\A(\pi)$, is the cardinality of the descent set of $\pi$, i.e.
\begin{equation}\label{desA def}
\des_\A(\pi):=|\left\{i\in [n-1]: \pi(i)>\pi(i+1)\right\}|.
\end{equation}
For example if
\[\pi=4,1,3,5,2\text{, then }\Des_\A(\pi)=\{1,4\}\text{, and }\des_\A(\pi)=2.\]

The classic bubble sort, which we will denote by $\bubA$, can be described completely in terms of generators (adjacent transpositions) and descents. Basically, one reads a permutation from left to right, and applies an adjacent transposition at each step where there is a descent. More precisely, for $i\in [n-1]$ define a map $s_i:S_n\rightarrow S_n$ by $s_i(\pi)=\pi\tau_i$ if $i\in\Des_\A(\pi)$, and $s_i(\pi)=\pi$ otherwise. Then
\begin{equation}\label{bubA def}
\bubA(\pi)=s_{n-1}\circ s_{n-2}\circ\cdots\circ s_1(\pi)
\end{equation}
For example if $\pi=3,1,4,5,2$ then we can find $\bubA(\pi)$ in 4 steps
\[\begin{array}{c|c}
\text{Step 1} & \mathbf{3},\mathbf{1},4,5,2\mapsto 1,3,4,5,2 \\
\text{Step 2} & 1,\mathbf{3},\mathbf{4},5,2\mapsto 1,3,4,5,2 \\
\text{Step 3} & 1,3,\mathbf{4},\mathbf{5},2\mapsto 1,3,4,5,2 \\
\text{Step 4} & 1,3,4,\mathbf{5},\mathbf{2}\mapsto 1,3,4,2,5 \\
\end{array}\]
thus $\bubA(\pi)= 1,3,4,2,5$.

There is a natural way to extend this sorting algorithm to other Coxeter groups, in particular the type B Coxeter group (see also \cite{armstrong}, \cite{ht}, \cite{reading}). We use the convention that the type B Coxeter group $B_n$ consists of signed permutations (see also \cite{bb}). A signed permutation $\pi$ is a bijection on the integers $[-n,n]$, with the condition that $\pi(-i)=-\pi(i)$. This forces $\pi(0)=0$, so $\pi$ is determined by its image on $[n]$, and we write signed permutations in one-line notation as $\pi=\pi(1),\pi(2),\dots,\pi(n)$. 
Given $\pi\in B_n$, we use $|\pi|$ to denote the permutation $|\pi|:=|\pi(1)|,|\pi(2)|,\dots,|\pi(n)|$. Note that $|\pi|\in S_n$, and $S_n$ is a subgroup of $B_n$.

To realize $B_n$ as a Coxeter group, we need a generating set. Let $\tau_0$ denote the signed permutation $\tau_0:=-1,2,3,...,n$. Given any $\pi\in B_n$, $\pi\tau_0$ is obtained from $\pi$ by simply changing the sign of $\pi(1)$.
Then $B_n$ is generated by the set $\{\tau_0,\tau_1,\dots,\tau_{n-1}\}$, where each $\tau_i$ for $i\in [n-1]$ is the adjacent transposition as defined above in the type A case. Define the Coxeter length in the same way as above, which one uses to define the (right) descent set of $\pi\in B_n$, denoted $\Des_\B(\pi)$, by
\[\Des_\B(\pi):=\left\{i\in [0,n-1]: l(\pi\tau_i)<l(\pi)\right\}.\]
And it turns out that
\[\Des_{\B}(\pi)=\{i\in[0,n-1]:\pi(i)>\pi(i+1)\},\]
recalling that $\pi(0)=0$. The \textit{type B descent number} of $\pi\in B_n$, denoted $\des_\B(\pi)$, is the cardinality of the descent set of $\pi$, i.e.
\begin{equation}\label{desB def}
\des_\B(\pi):=|\left\{i\in [0,n-1]: \pi(i)>\pi(i+1)\right\}|.
\end{equation}
For example if
\[\pi=-3,4,-1,-5,2\text{, then }\Des_{\B}(\pi)=\{0,2,3\}\text{, and }\des_{\B}(\pi)=3.\]

Since the definition of the classic bubble sort, $\bubA$, was given in terms of generators and descents, one can extend the definition in a very natural way to other Coxeter groups. This paper treats the case of extending the definition to the type B Coxeter group.
\begin{defn}\label{bubB defn}
For $i\in [0,n-1]$ define a map $s_i:B_n\rightarrow B_n$ by $s_i(\pi)=\pi\tau_i$ if $i\in\Des_\B(\pi)$, and $s_i(\pi)=\pi$ otherwise. We define the \textit{type B bubble sort}, denoted $\bubB$, by
\begin{equation}\label{bubB def}
\bubB(\pi)=s_{n-1}\circ s_{n-2}\circ\cdots\circ s_0(\pi).
\end{equation}
As before, one can think of this as reading a signed permutation from left to right, and applying a generator at each step where there is a descent.
\end{defn}

For example if $\pi=-3,1,4,-5,2$, then we find $\bubB(\pi)$ in 5 steps
\[\begin{array}{c|r}
\text{Step 0} &  \mathbf{-3},1,4,-5,2\mapsto 3,1,4,-5,2 \\
\text{Step 1} &  \mathbf{3},\mathbf{1},4,-5,2\mapsto 1,3,4,-5,2 \\
\text{Step 2} &  1,\mathbf{3},\mathbf{4},-5,2\mapsto 1,3,4,-5,2 \\
\text{Step 3} &  1,3,\mathbf{4},\mathbf{-5},2\mapsto 1,3,-5,4,2 \\
\text{Step 4} &  1,3,-5,\mathbf{4},\mathbf{2}\mapsto 1,3,-5,2,4 \\
\end{array}\]
thus $\bubB(\pi)= 1,3,-5,2,4$. 

\begin{remark}\label{bubB defn extend}
Note that we can extend this definition in the obvious way to words $\pi$ over $\mathbb{Z}$, provided that $|\pi|$ is a permutation on some subset of positive integers.
\end{remark}

By successively applying $\bubA$ to a permutation, or $\bubB$ to a signed permutation, one eventually obtains the identity permutation id. Define the \textit{type A bubble sort complexity} of $\pi\in S_n$ by
\begin{equation}\label{bscA def}
\bscA(\pi):=\min\{k:\bubA^k(\pi)=\id\}.
\end{equation}
Define the \textit{type B bubble sort complexity} of $\pi\in B_n$ by
\begin{equation}\label{bscB def}
\bscB(\pi):=\min\{k:\bubB^k(\pi)=\id\}.
\end{equation}

Given $\pi\in S_n$, one can quickly compute type A bubble sort complexity using a statistic called the maximum drop, which we denote by $\mdA(\pi)$. In \cite{ccdg}, Chung, Claesson, Dukes, and Graham give the distribution of descents in $S_n$ with respect to maximum drop. Much of the work in this paper is inspired by, and relies on, results from their work. Define
\begin{equation}\label{mdA def}
\mdA(\pi):=\max\{i-\pi(i):1\leq i\leq n\}.
\end{equation}
By induction, one can prove that for all $\pi\in S_n$ we have
\begin{equation}\label{bscA=mdA}
\bscA(\pi)=\mdA(\pi).
\end{equation}
Our motivation is to find a type B analog of the maximum drop statistic.

\begin{defn}\label{mdB def}
For $\pi\in B_n$, we say $\pi$ has a \textit{drop} at position $i$ if $\pi(i)<i$. If $\pi$ has a drop at position $i$, then we say the \textit{drop size} at position $i$ is $\min\{i-\pi(i),i\}$.
The \textit{type B maximum drop} of $\pi$, denoted $\mdB(\pi)$, is the maximum of all drop sizes occuring in $\pi$. In other words
\[\mdB(\pi):=\max\Big\{\max\{i-\pi(i):\pi(i)>0\},\max\{i:\pi(i)<0\}\Big\}.\]
For example $\mdB(-3,4,-1,-5,2)=4$ since there is a drop of size 4 at position 4. Note there are drops of size 3 at positions 3 and 5, and a drop of size 1 at position 1.
\end{defn}
In Proposition \ref{bscB=mdB2}, we show that type B maximum drop has the analogous property to \eqref{bscA=mdA}, that is for all $\pi\in B_n$ we have
\begin{equation}\label{bscB=mdB}
\bscB(\pi)=\mdB(\pi).
\end{equation}


Since our work is motivated by the results appearing in \cite{ccdg}, we state some of their results below. First define
\[\mathcal{A}_{n,k}:=\{\pi\in S_n: \mdA(\pi)\leq k\},\]
and
\[a_{n,k}(r):=|\{\pi\in \mathcal{A}_{n,k}:\des(\pi)=r\}|,\]
then define a $k$-$\mdA$-restricted descent polynomial
\[A_{n,k}(x):=\sum_{\pi\in\mathcal{A}_{n,k}}x^{\des_\A(\pi)}=\sum_{r\geq 0} a_{n,k}(r)x^r.\]
Note that for $k\geq n-1$, $\mathcal{A}_{n,k}=S_n$ and $A_{n,k}(x)$ becomes the \textit{type A Eulerian polynomial}, denoted $A_n(x)$, defined by
\[A_n(x)=\sum_{\pi\in S_n}x^{\des_\A(\pi)}
=\sum_{k=0}^{n-1}\left\langle\begin{matrix}n \\k \end{matrix}\right\rangle x^k.\]
The coefficient $\left\langle\begin{matrix}n \\k \end{matrix}\right\rangle$ is called an \textit{Eulerian number}. It is the number of permutation in $S_n$ with exactly $k$ descents, and is given explicitly by
\[\left\langle\begin{matrix}n \\k \end{matrix}\right\rangle=\sum_{i=0}^{k}(-1)^{i}{n+1\choose i}(k+1-i)^n.\]

The $k$-$\mdA$-restricted descent polynomials $A_{n,k}$ satisfy the following recurrence, which is equivalent to a generating function formula.

\begin{thm}[{\cite[Theorem 1]{ccdg}}]\label{typeA recurrence}

For $n\geq 0$,
\[A_{n+k+1,k}(x)=\sum_{i=1}^{k+1}{k+1 \choose i}(x-1)^{i-1}A_{n+k+1-i,k}(x),\]
with initial conditions $A_{i,k}(x)=A_i(x)$ for $0\leq i\leq k$.

Consequently,
\[\sum_{n\geq 0}A_{n,k}(x)z^n
=\frac{1+\sum_{t=1}^k\left(A_t(x)-\sum_{i=1}^t{k+1 \choose i}(x-1)^{i-1}A_{t-i}(x)\right)z^t}
{1-\sum_{i=1}^{k+1}{ k+1 \choose i}z^i(x-1)^{i-1}}.\]

\end{thm}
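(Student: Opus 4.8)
The plan is to reduce the two assertions to one: the rational generating function is a routine consequence of the recurrence. Indeed, granting that $A_{m,k}(x)=\sum_{i=1}^{k+1}\binom{k+1}{i}(x-1)^{i-1}A_{m-i,k}(x)$ holds for all $m\ge k+1$, multiplying $\sum_{m\ge0}A_{m,k}(x)z^m$ by $1-\sum_{i=1}^{k+1}\binom{k+1}{i}(x-1)^{i-1}z^i$ annihilates every coefficient of $z^M$ with $M\ge k+1$, and the surviving low-order terms are exactly $1+\sum_{t=1}^{k}\big(A_{t,k}-\sum_{i=1}^{t}\binom{k+1}{i}(x-1)^{i-1}A_{t-i,k}\big)z^t$; since $A_{t,k}=A_t$ for $t\le k$, this is the stated numerator. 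So the whole content is the recurrence.

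To prove the recurrence I would track one extra statistic: the drop at the last position, $\delta(\pi):=n-\pi(n)$, which lies in $\{0,1,\dots,k\}$ for $\pi\in\mathcal A_{n,k}$. Set $f_n^{(\delta)}(x):=\sum_{\pi\in\mathcal A_{n,k},\,\delta(\pi)=\delta}x^{\des_\A(\pi)}$, so that $A_{n,k}=\sum_{\delta=0}^k f_n^{(\delta)}$. The key step is a deletion/insertion bijection: for $\pi\in\mathcal A_{m,k}$ with $m\ge k+1$, delete the entry in position $m$ and standardize the remaining word to a permutation $\hat\pi$ of $[m-1]$. One checks that $\hat\pi\in\mathcal A_{m-1,k}$ — the only values decremented are those exceeding $\pi(m)\ge m-k$, and such large values occupy positions of drop at most $k-2$, so decrementing keeps every drop at most $k$ — and that the map is exactly $(k+1)$-to-one, the fibre being indexed by the new last value $w\in\{m-k,\dots,m\}$, equivalently by the new last drop $\delta=m-w\in\{0,\dots,k\}$. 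Comparing $\pi(m-1)$ with $w$ shows that appending $w$ creates exactly one new descent when $\delta>\delta(\hat\pi)$ and none otherwise. Hence $\mathbf f_m=M\mathbf f_{m-1}$ for $m\ge k+1$, where $M$ is the $(k+1)\times(k+1)$ matrix with $M_{\delta\delta'}=x$ if $\delta>\delta'$ and $1$ otherwise; equivalently $M=xJ+(1-x)U$ with $J$ the all-ones matrix and $U$ upper-triangular all-ones.

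Since $A_{m,k}=\mathbf 1^{T}\mathbf f_m$, the sequence $(A_{m,k})_m$ satisfies, for $m$ large, the order-$(k+1)$ linear recurrence whose coefficients come, via Cayley--Hamilton, from the characteristic polynomial of $M$. The crux is therefore to show
\[\det(\lambda I-M)=\lambda^{k+1}-\sum_{i=1}^{k+1}\binom{k+1}{i}(x-1)^{i-1}\lambda^{k+1-i}.\]
I would compute this by writing $\lambda I-M=A-xJ$ with $A:=\lambda I-(1-x)U$ upper-triangular, so $\det A=(\lambda-(1-x))^{k+1}$, and applying the matrix-determinant lemma to the rank-one update: $\det(\lambda I-M)=\det(A)\,(1-x\,\mathbf 1^{T}A^{-1}\mathbf 1)$. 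Expanding $A^{-1}$ in the nilpotent part $N:=U-I$ reduces everything to the clean identity $\mathbf 1^{T}N^{t}\mathbf 1=\binom{k+1}{t+1}$ (the number of strictly increasing chains of length $t+1$ in $\{0,\dots,k\}$), after which the binomial coefficients and the powers of $(x-1)$ fall out exactly.

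The main obstacle is not this determinant but the boundary range. The transfer relation $\mathbf f_m=M\mathbf f_{m-1}$, and with it the Cayley--Hamilton recurrence, is only forced once $m\ge 2k+1$, because for indices $\le k$ the state space collapses ($\mathcal A_{j,k}=S_j$, so some components $f_j^{(\delta)}$ vanish) and the bijection has fewer than $k+1$ fibres. The cases $k+1\le m\le 2k$ must be verified directly; at $m=k+1$ the assertion is precisely the classical Eulerian-polynomial identity $A_{k+1}=\sum_{i=1}^{k+1}\binom{k+1}{i}(x-1)^{i-1}A_{k+1-i}$ (valid since $\mathcal A_{k+1,k}=S_{k+1}$), and the remaining boundary cases follow from the same bookkeeping, their discrepancies being exactly the correction terms in the numerator above. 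As a sanity check, setting $x=1$ collapses the recurrence to $A_{m,k}(1)=(k+1)A_{m-1,k}(1)$, recovering $|\mathcal A_{n,k}|=(k+1)^{\,n-k}k!$.
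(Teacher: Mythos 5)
Most of your machinery is sound, and it is a genuinely different route from the one the paper takes (in its type B mirror of the cited proof, Lemmas \ref{image f}--\ref{f bijection} and Theorem \ref{typeB recurrence normal}, which refines by descent-set \emph{containment} and a binomial transform rather than by a transfer matrix). Your delete-the-last-entry map is correctly analyzed: for $m\ge k+1$ the decremented values $v>\pi(m)\ge m-k$ indeed sit at positions $p$ with $p-v\le k-2$, the map is exactly $(k+1)$-to-one with fibres indexed by $w\in\{m-k,\dots,m\}$, and appending $w$ creates a descent precisely when the new last drop exceeds the old. The characteristic polynomial claim is also true (for $k=2$ one checks $\det(\lambda I-M)=\lambda^3-3\lambda^2-3(x-1)\lambda-(x-1)^2$), and $\mathbf 1^{T}N^{t}\mathbf 1=\binom{k+1}{t+1}$ is the right chain count. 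The generating-function reduction at the start is routine and correct, matching the paper's Corollary \ref{typeB generating} argument.

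The genuine gap is the middle range $k+2\le m\le 2k$, which you name but do not actually handle. The transfer relation $\mathbf f_m=M\mathbf f_{m-1}$ holds exactly for $m\ge k+1$ and genuinely fails below: e.g.\ for $k\ge 2$, $f_2^{(2)}=0$ while $(M\mathbf f_1)^{(2)}=x$. Hence $\mathbf f_m=M^{m-k}\mathbf f_k$, and Cayley--Hamilton yields $A_{m,k}=\sum_{i=1}^{k+1}\binom{k+1}{i}(x-1)^{i-1}A_{m-i,k}$ only once $m-k\ge k+1$, i.e.\ $m\ge 2k+1$. The theorem asserts it for every $m\ge k+1$; at $m=k+1$ you correctly invoke the classical identity $A_{k+1}=\sum_i\binom{k+1}{i}(x-1)^{i-1}A_{k+1-i}$, but for $k+2\le m\le 2k$ the identity mixes genuinely restricted polynomials (e.g.\ $A_{4,2}=1+10x+7x^2\ne A_4$) with Eulerian polynomials at indices below $k$, which are not of the form $\mathbf 1^{T}M^{j}\mathbf f_k$, so Cayley--Hamilton says nothing about them. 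Your disposal of these cases --- that ``their discrepancies are exactly the correction terms in the numerator'' --- is circular: the stated numerator, with corrections only up to degree $k$, is \emph{equivalent} to the recurrence holding for all $m\ge k+1$; if it held only from $2k+1$ on, the numerator would carry corrections up to degree $2k$. These $k-1$ identities need a real argument. This is precisely what the paper's method buys: the stripping bijection $f,g$ removes the whole terminal decreasing run forced by $S$ and is valid uniformly for all $n\ge k+1$ even when the reduced index $n-i-1$ drops below $k$ (where $\mathcal B_{n-i-1,k}=B_{n-i-1}$ harmlessly), so summing $x^{|S|}$ over $S$ gives the recurrence in one stroke with no boundary stratum. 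A repair in your framework would be to refine the state by the terminal decreasing run rather than the single last drop, which essentially reconstructs the paper's descent-set argument.
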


The following theorem gives an explicit formula for $a_{n,k}(r)$.
\begin{thm}[{\cite[Theorem 2]{ccdg}}]\label{typeA explicit}

We have $A_{n,k}(x)=\sum_d\beta_k((k+1)d)x^{d}$, where
\[\sum_j\beta_k(j)u^j=P_k(u)\left(\frac{1-u^{k+1}}{1-u}\right)^{n-k},\]
with
\[P_k(u)=\sum_{j=0}^{k}A_{k-j}(u^{k+1})(u^{k+1}-1)^j\sum_{i=j}^{k}{i\choose j}u^{-i}.\]

\end{thm}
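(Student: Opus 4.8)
The plan is to derive the explicit formula from the generating function in Theorem~\ref{typeA recurrence} by a partial-fraction argument combined with a roots-of-unity filter. First I would put the denominator in closed form. By the binomial theorem,
\[\sum_{i=1}^{k+1}\binom{k+1}{i}z^i(x-1)^{i-1}=\frac{(1+z(x-1))^{k+1}-1}{x-1},\]
so the generating function becomes $\sum_{n\geq 0}A_{n,k}(x)z^n=N(z)/D(z)$ with
\[D(z)=\frac{x-(1+z(x-1))^{k+1}}{x-1},\]
a polynomial of degree $k+1$ in $z$, and $N(z)$ the degree-$k$ numerator from Theorem~\ref{typeA recurrence}. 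It is worth recording that $N(z)$ is exactly the truncation to degree $\leq k$ of $\big(\sum_{s\geq 0}A_s(x)z^s\big)D(z)$, which is precisely how the Eulerian initial conditions $A_{i,k}=A_i$ enter.

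Next I would substitute $x=u^{k+1}$. Since $A_{n,k}(x)=\sum_r a_{n,k}(r)x^r$, the series $A_{n,k}(u^{k+1})$ contains only powers of $u$ divisible by $k+1$, and recovering the $x^d$-coefficients amounts to $A_{n,k}(u^{k+1})=\sum_d a_{n,k}(d)u^{(k+1)d}$. The roots of $D(z)$ are found by solving $(1+z(x-1))^{k+1}=x=u^{k+1}$: with $\omega=e^{2\pi i/(k+1)}$ these are the simple roots $z_m=(\omega^m u-1)/(u^{k+1}-1)$ for $m=0,1,\dots,k$. Because $\deg N<\deg D$, a partial-fraction expansion $N/D=\sum_m c_m/(z-z_m)$ holds with $c_m=N(z_m)/D'(z_m)$, and extracting $[z^n]$ gives $A_{n,k}(x)=-\sum_m c_m z_m^{-(n+1)}$. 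A short computation yields $D'(z_m)=-(k+1)(\omega^m u)^k$, so writing $v=\omega^m u$ and $\rho_m=(1-u^{k+1})/(1-v)=z_m^{-1}$, this rearranges into
\[A_{n,k}(u^{k+1})=\frac{1}{k+1}\sum_{m=0}^{k}\left(\frac{N(z_m)}{v^k}\,z_m^{-(k+1)}\right)\rho_m^{\,n-k}.\]

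On the other hand, the claimed formula, rewritten via the roots-of-unity filter picking out the coefficients divisible by $k+1$, reads
\[A_{n,k}(u^{k+1})=\frac{1}{k+1}\sum_{m=0}^{k}P_k(\omega^m u)\left(\frac{1-u^{k+1}}{1-\omega^m u}\right)^{n-k}=\frac{1}{k+1}\sum_{m=0}^{k}P_k(v)\,\rho_m^{\,n-k},\]
using $(1+u+\cdots+u^k)^{n-k}=((1-u^{k+1})/(1-u))^{n-k}$ and $(\omega^m)^{k+1}=1$. Both displays present the same sequence in $n$ (for all $n\geq k$) as a linear combination of the geometric sequences $\rho_m^{\,n-k}$, whose ratios $\rho_m$ are pairwise distinct for generic $u$; matching coefficients term by term forces $P_k(v)=N(z_m)\,z_m^{-(k+1)}/v^k$, which, as $u$ varies, is an identity of Laurent polynomials in $v$.

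The main obstacle is the final verification that this residue expression collapses to the stated double sum defining $P_k$. Concretely, one must expand $N(z_m)=\sum_{t=0}^k\big(A_t(x)-\sum_{i=1}^t\binom{k+1}{i}(x-1)^{i-1}A_{t-i}(x)\big)z_m^t$, use $x-1=v^{k+1}-1$ and $z_m^{-1}=(v^{k+1}-1)/(v-1)$, and reorganize the resulting double sum. Reindexing by a parameter $j$ that tracks the gap between $k$ and the running indices should convert the weights $\binom{k+1}{i}(x-1)^{i-1}$ together with the truncation into the factors $A_{k-j}(v^{k+1})(v^{k+1}-1)^j$ and the inner sum $\sum_{i=j}^{k}\binom{i}{j}v^{-i}$; controlling the negative powers of $v$ produced by $z_m^{-(k+1)}$ and confirming that the binomial bookkeeping matches $\sum_{i=j}^{k}\binom{i}{j}v^{-i}$ is the delicate step. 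I expect this to be a careful but essentially routine manipulation once the partial-fraction skeleton above is set up, with the appearance of the modulus $k+1$ and the indices $(k+1)d$ being merely an artifact of the substitution $x=u^{k+1}$.
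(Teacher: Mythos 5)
Your proposal is correct, but it takes a genuinely different route from the paper, because the paper does not prove this statement at all: Theorem \ref{typeA explicit} is imported wholesale from \cite[Theorem 2]{ccdg}, and the paper's only engagement with its proof is the observation, recorded as Lemma \ref{battle}, that the argument in \cite{ccdg} uses nothing about the polynomials $A_{n,k}(x)$ beyond the recurrence of Theorem \ref{typeA recurrence} and the initial conditions $A_{i,k}=A_i$ for $0\le i\le k$ --- which is exactly what lets the author transfer the result to type B. Your partial-fraction plus roots-of-unity derivation is a legitimate self-contained proof, and notably it has the same feature: the Eulerian polynomials enter only through the numerator $N(z)$, so your argument proves Lemma \ref{battle} in full generality and hence yields Theorem \ref{typeB explicit} simultaneously, with $B_t$ in place of $A_t$ throughout. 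Two remarks. First, your ``matching coefficients'' framing is logically backwards as written (you cannot match against the formula being proved), but this is harmless: your skeleton already reduces the theorem, via the filter identity and the genericity of $u$ (which suffices, since the conclusion is an identity of polynomial coefficients), to the single Laurent-polynomial identity $P_k(v)=N(z)\,z^{-(k+1)}v^{-k}$ with $z^{-1}=(v^{k+1}-1)/(v-1)$, and one then verifies that identity directly rather than ``forcing'' it. Second, the step you flag as delicate does close as you predict: writing $N(z)z^{-(k+1)}=\sum_{t=0}^{k}c_t\,\rho^{\,k+1-t}$ with $\rho=z^{-1}=1+v+\cdots+v^{k}$ and $c_t=A_t(x)-\sum_{i=1}^{t}\binom{k+1}{i}(x-1)^{i-1}A_{t-i}(x)$, then collecting the coefficient of $A_{k-j}(x)$ and using $x-1=(v-1)\rho$ to strip a common factor $\rho^{\,j}$, everything reduces to the polynomial identity
\[\sum_{i=0}^{k}v^{i}\;-\;\sum_{i=1}^{j}\binom{k+1}{i}(v-1)^{i-1}\;=\;(v-1)^{j}\sum_{i=j}^{k}\binom{i}{j}v^{k-i},\]
which follows by induction on $j$ using Pascal's rule (the constant terms match because $\binom{k}{j}-\binom{k+1}{j+1}=-\binom{k}{j+1}$). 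So your proof is sound; its added value over the paper's citation is that it makes the recurrence-only dependence explicit and reconstructs the ``fierce battle'' of \cite{ccdg} in a few displayed lines, while the paper's route buys brevity by outsourcing that computation and extracting only the abstract statement it needs.
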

In other words, the coefficients $a_{n,k}(r)$ of the polynomial $A_{n,k}(x)$, have the remarkable property that they are given by every $(k+1)^{\text{st}}$ coefficient in the polynomial 
\[P_k(u)(1+u+u^2+\dots+u^k)^{n-k}.\]
For example setting $n=4$ and $k=2$ we have
\[
P_2(u)(1+u+u^2)^{4-2}=(1+u+2u^2+u^3+u^4)(1+u+u^2)^2\]
\[=1+3u+7u^2+10u^3+12u^4+10u^5+7u^6+3u^7+u^8.
\]
So the coefficients of $A_{4,2}(x)$ are given by every third coefficient in the above polynomial, that is
\[A_{4,2}(x)=1+10x+7x^2.\]

We provide analogous enumerations for signed permutations in $B_n$ with $r$ descents and maximum drop size less than or equal to $k$. First we define
\[\mathcal{B}_{n,k}:=\{\pi\in B_n: \mdB(\pi)\leq k\},\]
and
\[b_{n,k}(r):=|\{\pi\in \mathcal{B}_{n,k}:\des_\B(\pi)=r\}|,\]
then define a $k$-$\mdB$-restricted descent polynomial
\[B_{n,k}(x):=\sum_{\pi\in\mathcal{B}_{n,k}}x^{\des_\B(\pi)}=\sum_{r\geq 0} b_{n,k}(r)x^r.\]
Note that for $k\geq n$, $\mathcal{B}_{n,k}=B_n$ and $B_{n,k}(x)$ becomes the \textit{type B Eulerian polynomial}, denoted $B_n(x)$, defined by
\[B_n(x)=\sum_{\pi\in B_n}x^{\des_\B(\pi)}
=\sum_{k=0}^{n-1}\left\langle\begin{matrix}n \\k \end{matrix}\right\rangle_\B x^k.\]
The coefficient $\left\langle\begin{matrix}n \\k \end{matrix}\right\rangle_\B$ is called a \textit{type B Eulerian number}. It is the number of signed permutations in $B_n$ with exactly $k$ type B descents, and is given explicitly by
\[\left\langle\begin{matrix}n \\k \end{matrix}\right\rangle_\B=\sum_{i=0}^{k}(-1)^{i}{n+1 \choose i}(2k+1-2i)^{n}.\]

It turns out that the  $k$-$\mdB$-restricted descent polynomials $B_{n,k}(x)$, satisfy the same recurrence as their type A counterparts (compare with Theorem \ref{typeA recurrence}).

\begin{thm}\label{typeB recurrence}

For $n\geq 0$,
\[B_{n+k+1,k}(x)=\sum_{i=1}^{k+1}{k+1 \choose i}(x-1)^{i-1}B_{n+k+1-i,k}(x),\]
with initial conditions $B_{i,k}(x)=B_i(x)$ for $0\leq i\leq k$.

Consequently,
\[\sum_{n\geq 0}B_{n,k}(x)z^n
=\frac{1+\sum_{t=1}^k\left(B_t(x)-\sum_{i=1}^t{k+1 \choose i}(x-1)^{i-1}B_{t-i}(x)\right)z^t}
{1-\sum_{i=1}^{k+1}{ k+1 \choose i}z^i(x-1)^{i-1}}.\]

\end{thm}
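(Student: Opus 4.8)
The plan is to prove the recurrence by adapting the combinatorial decomposition of Chung, Claesson, Dukes, and Graham \cite{ccdg} to the signed setting, and then to obtain the generating function by a routine formal manipulation. The central point is that the constraint $\mdB(\pi)\le k$ imposes the same ``window'' structure on signed permutations that $\mdA(\pi)\le k$ imposes on ordinary permutations, so the argument producing the recurrence is insensitive to type; only the initial data $B_i(x)$ differs from its type A counterpart $A_i(x)$.

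First I would record the structural consequences of the drop bound and use them to set up the recursive step. For $\pi\in\mathcal{B}_{n,k}$, the definition of $\mdB$ forces every negative entry to lie in one of the first $k$ positions, and every positive entry to satisfy $\pi(i)\ge i-k$; in particular, when $n\ge k+1$ the largest value $n$ occupies one of the last $k+1$ positions. This is exactly the window hypothesis used in \cite{ccdg}. Working with $\pi\in\mathcal{B}_{n+k+1,k}$, I would then analyze the block of the $k+1$ largest values $\{n+1,\dots,n+k+1\}$ together with the positions they may legally occupy: deleting a suitable initial segment of this block, organized as an inclusion–exclusion over which of the $k+1$ window slots are involved, reduces $\pi$ to an element of $\mathcal{B}_{n+k+1-i,k}$. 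The binomial $\binom{k+1}{i}$ records the choice of $i$ slots from the window, the power $(x-1)^{i-1}$ records the net change in the number of type B descents, and summing over $i=1,\dots,k+1$ reproduces the recurrence. The initial conditions hold because for $0\le i\le k$ the bound $\mdB\le k$ is automatically satisfied (any drop size at position $j$ is at most $j\le i\le k$), so $\mathcal{B}_{i,k}=B_i$ and hence $B_{i,k}(x)=B_i(x)$.

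The main obstacle is verifying that the sign data integrates into this decomposition without disturbing the weights. Two features are genuinely new in type B: the descent at position $0$ arising from a negative first entry, and the confinement of negative entries to the leftmost $k$ positions. I would need to check that the deletion and relabeling at the right end never interact with the sign descent at position $0$, and that the descent bookkeeping still produces precisely the factor $(x-1)^{i-1}$. This is the step where the parallel with \cite{ccdg} must be established carefully rather than merely asserted, since it is the only place where type A and type B could conceivably diverge.

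Finally, the generating function follows by standard algebra. Multiplying the recurrence by $z^{n+k+1}$, summing over $n\ge 0$, and incorporating the initial conditions, one solves a linear equation for $\sum_{n\ge 0}B_{n,k}(x)z^n$. The denominator is exactly $1-\sum_{i=1}^{k+1}\binom{k+1}{i}z^i(x-1)^{i-1}$, which can be rewritten using
\[\sum_{i=1}^{k+1}\binom{k+1}{i}z^i(x-1)^{i-1}=\frac{(1+(x-1)z)^{k+1}-1}{x-1},\]
while the numerator collects the constant term $1$ together with the correction terms $B_t(x)-\sum_{i=1}^t\binom{k+1}{i}(x-1)^{i-1}B_{t-i}(x)$ for $1\le t\le k$. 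This step is purely formal and identical to the type A computation, so it presents no difficulty once the recurrence is in hand.
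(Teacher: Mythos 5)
Your overall route is the same as the paper's: adapt the descent-set decomposition of \cite{ccdg} to signed permutations, observe that only the initial conditions change, and finish with the standard generating-function manipulation (which in your write-up, as in the paper's Corollary \ref{typeB generating}, is correct and routine). But there are two genuine problems. First, your ``window hypothesis'' is false as stated: the bound $\mdB(\pi)\le k$ does \emph{not} force the largest value to occupy one of the last $k+1$ positions. For instance $\pi=n,1,2,\dots,n-1$ has $\mdA(\pi)=\mdB(\pi)=1$ with the value $n$ in position $1$. Your inference from $\pi(i)\ge i-k$ runs in the wrong direction: the drop bound constrains which \emph{values} may sit in a given position, not which positions a given value may occupy. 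The fact actually used, here and in \cite{ccdg}, is the dual one: the entry in the last position satisfies $\pi(n)\ge n-k$, so the terminal decreasing run singled out by a subset $S$ of the descent set consists of values drawn from the top $k+1$ values $[n-k,n]$ --- an arbitrary $(i+1)$-subset, not an initial segment of that block, which is exactly where the factor $\binom{k+1}{i+1}$ comes from. In type B one needs the additional observation, absent from your proposal, that $\pi(n)$ must be \emph{positive} when $k<n$ (a negative entry at position $n$ has drop size $n>k$), which is what makes the deleted set $X$ a set of positive integers and the count independent of signs.

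Second, and more seriously, the step you yourself flag as ``the only place where type A and type B could conceivably diverge'' --- that deletion and relabeling respect both the drop bound and the descent bookkeeping in the presence of signs --- is precisely the mathematical content of the paper's proof, and you leave it unexecuted. The paper carries it out by introducing the signed standardization $\sst$, noting it preserves the type B descent set (including the descent at position $0$, since signs stay in place), and establishing via the maps $f$ and $g$ a bijection $\mathcal{B}_{n,k}(S)\to \mathcal{B}_{n-i-1,k}(S\cap[0,n-i-2])\times\binom{[n-k,n]}{i+1}$, where $i=t_n(S)$; the drop bound is verified by a three-case analysis (a negative entry keeps drop size $j$ at position $j$ under $\sst$; a positive entry below $\pi(n)$ is unchanged; a positive entry above $\pi(n)$ decreases but remains $\ge n-k$, so its drop size stays $\le k$). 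The recurrence then follows from $b_{n,k}(S)=b_{n-i-1,k}(S\cap[0,n-i-2])\binom{k+1}{i+1}$ via the transform $B_{n,k}(x+1)=\sum_{S\subseteq[0,n-1]}b_{n,k}(S)x^{|S|}$ --- your ``inclusion--exclusion'' weight $(x-1)^{i-1}$ is this transform in disguise. Without constructing that bijection and checking these cases, your recurrence is asserted rather than proved; the initial conditions (drop size at position $j$ is at most $j$, so $\mathcal{B}_{i,k}=B_i$ for $i\le k$) and the algebra deriving the generating function are the only parts of the proposal that are complete.
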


Similar to the polynomial $P_k(u)$ defined above (see Theorem \ref{typeA explicit}), we define
\[Q_k(u)=\sum_{j=0}^{k}B_{k-j}(u^{k+1})(u^{k+1}-1)^j\sum_{i=j}^{k}{i\choose j}u^{-i}.\]
It also turns out that the coefficients $b_{n,k}(r)$ of the polynomial $B_{n,k}(x)$, have the analogous remarkable property that they are given by every $(k+1)^{\text{st}}$ coefficient in the expression 
\[Q_k(u)(1+u+u^2+\dots+u^k)^{n-k}.\]
For example setting $n=4$ and $k=2$ we have
\[Q_2(u)(1+u+u^2)^{4-2}=(1+4u+6u^2+6u^3+4u^4+2u^5+u^6)(1+u+u^2)^2\]
\[=1+6u+17u^2+32u^3+43u^4+44u^5+35u^6+22u^7+11u^8+4u^9+u^{10}.\]
The coefficients of $B_{4,2}(x)$ are in fact given by every third coefficient above, that is
\[B_{4,2}(x)=1+32x+35x^2+4x^3.\]
We state this precisely this in the following theorem.
\begin{thm}\label{typeB explicit}

We have $B_{n,k}(x)=\sum_d\gamma_k((k+1)d)x^{d}$, where
\[\sum_j\gamma_k(j)u^j=Q_k(u)\left(\frac{1-u^{k+1}}{1-u}\right)^{n-k},\]
with
\[Q_k(u)=\sum_{j=0}^{k}B_{k-j}(u^{k+1})(u^{k+1}-1)^j\sum_{i=j}^{k}{i\choose j}u^{-i}.\]

\end{thm}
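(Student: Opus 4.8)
The plan is to verify that the polynomials produced by the right-hand side obey the recurrence and initial conditions of Theorem \ref{typeB recurrence}, and then to conclude by uniqueness. Write $C(u):=\frac{1-u^{k+1}}{1-u}=1+u+\cdots+u^k$, let $\mathrm{Sec}$ denote the $(k+1)$-section operator sending a formal Laurent series $\sum_m a_m u^m$ to $\sum_d a_{(k+1)d}\,x^d$ (so the theorem asserts exactly $B_{n,k}(x)=\mathrm{Sec}[Q_k(u)C(u)^{n-k}]$), and set $\tilde B_{n,k}(x):=\mathrm{Sec}[Q_k(u)C(u)^{n-k}]$, noting this is well defined since $Q_k(u)$ is a Laurent polynomial and, for $n<k$, the negative powers $C(u)^{n-k}$ are ordinary power series. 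Since Theorem \ref{typeB recurrence} exhibits $B_{n,k}$ as the unique solution of a recurrence that determines each term from its $k+1$ predecessors, subject to $B_{i,k}=B_i$ for $0\le i\le k$, it suffices to show that $\tilde B_{n,k}$ satisfies the same recurrence and the same initial values.

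I expect the recurrence to be the formal, routine part. Two facts drive it: $\mathrm{Sec}$ is linear, and because $(u^{k+1}-1)^{i-1}$ is a polynomial in $u^{k+1}$, multiplication by it commutes with $\mathrm{Sec}$ and becomes multiplication by $(x-1)^{i-1}$. Hence
\[\sum_{i=1}^{k+1}\binom{k+1}{i}(x-1)^{i-1}\tilde B_{n+k+1-i,k}(x)=\mathrm{Sec}\!\left[Q_k(u)\sum_{i=1}^{k+1}\binom{k+1}{i}(u^{k+1}-1)^{i-1}C(u)^{\,n+1-i}\right].\]
Using $u^{k+1}-1=(u-1)C(u)$ together with $\sum_{i=1}^{k+1}\binom{k+1}{i}(u-1)^{i-1}=\frac{u^{k+1}-1}{u-1}=C(u)$, the inner sum collapses to $C(u)^{\,n-k}\cdot C(u)^{k+1}=C(u)^{\,n+1}$, so the right-hand side equals $\mathrm{Sec}[Q_k(u)C(u)^{\,n+1}]=\tilde B_{n+k+1,k}(x)$, as required. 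The decisive point is that $Q_k(u)$ merely rides along as a common factor: this step uses nothing about $Q_k$ and is, verbatim with $Q_k$ in place of $P_k$, the recurrence verification behind Theorem \ref{typeA explicit}.

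The initial conditions are where the Eulerian polynomials genuinely enter, and this is the step I expect to be the main obstacle. One must show $\tilde B_{i,k}(x)=\mathrm{Sec}[Q_k(u)C(u)^{\,i-k}]=B_i(x)$ for $0\le i\le k$. The top case $i=k$ is transparent: in $Q_k(u)=\sum_{j=0}^{k}B_{k-j}(u^{k+1})(u^{k+1}-1)^j\sum_{i'=j}^{k}\binom{i'}{j}u^{-i'}$ the factor $B_{k-j}(u^{k+1})(u^{k+1}-1)^j$ carries only exponents divisible by $k+1$, while among the exponents $-i'$ with $j\le i'\le k$ only $i'=0$ is divisible by $k+1$; this occurs only for $j=0$, where it carries the factor $B_k(u^{k+1})$, so $\mathrm{Sec}[Q_k(u)]=B_k(u^{k+1})=B_k(x)$. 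For $0\le i<k$ the negative power $C(u)^{\,i-k}$ genuinely mixes residues modulo $k+1$, and the inner sums $\sum_{i'=j}^{k}\binom{i'}{j}u^{-i'}$ appearing in $Q_k$ are tailored precisely to cancel the off-residue contributions; carrying out this reduction to $B_i(u^{k+1})$ is the heart of the matter. Here again, however, the computation is identical to the verification of the type A initial conditions $\tilde A_{i,k}=A_i$ underlying Theorem \ref{typeA explicit}, with each Eulerian polynomial $A_m$ replaced by $B_m$ and no other change, since no property of the $A_m$ beyond their role as prescribed initial data is used.

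Once the recurrence and the initial values both match, uniqueness of the solution to the recurrence of Theorem \ref{typeB recurrence} gives $\tilde B_{n,k}=B_{n,k}$ for all $n\ge 0$, which is the theorem. It is worth recording the conceptual shortcut this exposes: Theorem \ref{typeB recurrence} shows that $B_{n,k}$ and $A_{n,k}$ solve the \emph{same} linear recurrence and differ only through the initial polynomials $B_i$ versus $A_i$; since the construction of the explicit solution in \cite{ccdg} is linear in those initial data and records them precisely as the coefficients assembled into $P_k$, the uniform substitution $A_m\mapsto B_m$ — equivalently $P_k\mapsto Q_k$ — carries Theorem \ref{typeA explicit} into Theorem \ref{typeB explicit}. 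The genuine content to be checked is thereby confined to the initial conditions.
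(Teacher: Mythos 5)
Your proposal is correct and follows essentially the same route as the paper: the paper likewise reduces Theorem \ref{typeB explicit} to Theorem \ref{typeB recurrence} together with the observation (recorded as Lemma \ref{battle}) that the proof of Theorem \ref{typeA explicit} in \cite{ccdg} depends only on the recurrence and the initial data, so the uniform substitution $A_m\mapsto B_m$, $P_k\mapsto Q_k$ carries it over verbatim. Your explicit section-operator verification of the recurrence and of the $i=k$ initial condition is a nice supplement, but the load-bearing step --- the initial conditions for $0\le i<k$ --- is deferred to \cite{ccdg} on exactly the same grounds the paper uses.
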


While our work here is motivated by the work of Chung, Claesson, Dukes, and Graham in \cite{ccdg}, that paper was motivated by its connection to a paper of Chung and Graham \cite{cg} that deals with juggling sequences. We explain this connection in section \ref{section juggling}, and show how our work can also be interpreted in terms of certain juggling sequences.

\section{The type B maxdrop statistic}\label{section mdb}

We introduced the type B maxdrop statistic in Definition \ref{mdB def}, and claimed that it is equal to our type B bubble sort complexity (see \eqref{bscB=mdB}). Here we prove this claim. The first step is to derive an alternate description of the type B bubble sort.

\begin{defn}\label{Odef}
Given any signed permutation $\pi=\pi(1),\pi(2),\dots,\pi(n)$, let $\sigma:=|\pi(1)|,\pi(2),\pi(3),\dots,\pi(n)\in B_n$. There exists a unique index $j$ such that $\sigma(j)\geq \sigma(i)$ for $i\in[n]$. Split $\pi$ into blocks $L$ and $R$ to the left and right of $\pi(j)$, i.e. $\pi=L\pi(j)R$. Define an operator $O$ which acts recursively by
\[O(L\pi(j) R)=O(L)R|\pi(j)|,\]
where $O(\emptyset)=\emptyset$.
\end{defn}

\begin{prop}\label{bubB alt}
For any signed permutation $\pi$, we have $O(\pi)=\bubB(\pi)$. Consequently, if $\pi=L\pi(j)R$ as in Definition \ref{Odef}, then
\begin{equation}\label{bubdef}
\bubB(L\pi(j) R)=\bubB(L)R|\pi(j)|.
\end{equation}
\end{prop}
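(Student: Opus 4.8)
The plan is to prove the stronger-looking identity $O(\pi)=\bubB(\pi)$ by strong induction on $n$, from which the displayed assertion \eqref{bubdef} follows immediately: once $O=\bubB$ is known, unwinding the definition of $O$ gives $\bubB(\pi)=O(\pi)=O(L)R|\pi(j)|=\bubB(L)R|\pi(j)|$.

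First I would record the reduction that drives the whole argument. Applying $s_0$ to $\pi$ has exactly the effect of replacing $\pi(1)$ by $|\pi(1)|$, so $s_0(\pi)=\sigma$ in all cases (when $\pi(1)>0$ the step $s_0$ does nothing and already $\sigma=\pi$; when $\pi(1)<0$ it flips the sign). Hence
\[\bubB(\pi)=s_{n-1}\circ\cdots\circ s_1\circ s_0(\pi)=s_{n-1}\circ\cdots\circ s_1(\sigma).\]
Extending the one-pass type A bubble sort to words over $\Z$ with distinct entries in the obvious way (just as in Remark \ref{bubB defn extend} for $\bubB$), I write $\bubA(\sigma)$ for the composite $s_{n-1}\circ\cdots\circ s_1$ applied to $\sigma$, a single left-to-right pass comparing adjacent entries as integers and swapping on descents. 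The same computation applied to any word $w$ over $\Z$ whose absolute values are distinct positive integers gives $\bubB(w)=\bubA(\sigma_w)$, where $\sigma_w:=|w(1)|,w(2),\dots$.

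The heart of the matter is a lemma describing one bubble-sort pass: if $w=w(1),\dots,w(m)$ has distinct entries and $j$ is the position of its maximum $M=w(j)$, then
\[\bubA(w)=\bubA\big(w(1)\cdots w(j-1)\big)\cdot\big(w(j+1)\cdots w(m)\big)\cdot M.\]
To prove this I would track the trajectory of $M$. The steps $s_1,\dots,s_{j-1}$ touch only positions $\le j$; since position $j$ still holds $M$ when $s_{j-1}$ fires and position $j-1$ then holds a smaller value, $s_{j-1}$ performs no swap, so these steps simply carry out one pass on the prefix $w(1)\cdots w(j-1)$ and leave $M$ fixed at position $j$. The remaining steps $s_j,\dots,s_{m-1}$ each compare $M$ with the entry immediately to its right, which is always smaller, hence swap every time; this carries $M$ rightward to position $m$ while shifting each of $w(j+1),\dots,w(m)$ one place left in their original order. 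Verifying both that $M$ is undisturbed before the pass reaches it and that the suffix retains its relative order is the step I expect to require the most care.

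Finally I would assemble the induction by taking $w=\sigma$, whose argmax is the index $j$ of Definition \ref{Odef}. Since the entries of $\sigma$ in positions $>1$ agree with those of $\pi$, the suffix $\sigma(j+1)\cdots\sigma(n)$ is exactly $R$, and because the maximum $M$ of $\sigma$ is positive one checks $M=|\pi(j)|$ in both cases $j=1$ and $j>1$. The prefix $\sigma(1)\cdots\sigma(j-1)$ is precisely $\sigma_L$ for $L=\pi(1)\cdots\pi(j-1)$, so $\bubA(\sigma(1)\cdots\sigma(j-1))=\bubA(\sigma_L)=\bubB(L)$, and as $L$ has length $j-1<n$ the induction hypothesis gives $\bubB(L)=O(L)$. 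Combining the lemma with these identifications yields $\bubB(\pi)=\bubA(\sigma)=\bubB(L)\,R\,M=O(L)\,R\,|\pi(j)|=O(\pi)$, the base case $n\le 1$ being immediate. This completes the plan.
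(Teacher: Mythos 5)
Your proposal is correct and follows essentially the same route as the paper's proof: induction on the length $n$, decomposing $\pi=L\pi(j)R$ at the position of the maximum of $\sigma$, and applying the inductive hypothesis to the shorter word $L$ (with the same implicit extension of $O$ and $\bubB$ to words over $\Z$). The only difference is one of rigor, which works in your favor: by observing $s_0(\pi)=\sigma$ and reducing to a type A pass, your trajectory lemma for the maximum gives a uniform treatment of the cases $j=1$ and $j>1$ and fills in precisely the step the paper's proof declares ``clear.''
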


\begin{proof}

Let $\pi\in B_n$, and induct on $n$. The base case is obvious.

Now let $n\geq 2$ and assume $O(\alpha)=\bubB(\alpha)$ whenever $\alpha$ is a word over $\mathbb{Z}$ with length less than $n$ (assume $|\alpha|$ is a permutation on some subset of positive integers, see also Remark \ref{bubB defn extend}) . Let $\pi\in B_n$ and let $\pi=L\pi(j)R$ as in Definition \ref{Odef}. Then
\[O(\pi)=O(L)R|\pi(j)|=\bubB(L)R|\pi(j)|.\]

If $j=1$, then $L=\emptyset$ and 
\[\bubB(\pi)=\bubB(\pi(1)R)=R|\pi(1)|=O(\pi).\]
This is because $\bubB$ will make $\pi(1)$ positive (if its not already) in the $0^{\text{th}}$ step. Then $|\pi(1)|>\pi(i)$ for $2\leq i\leq n$ and each successive step in $\bubB$ will apply an adjacent transposition, moving $|\pi(1)|$ to the last position.

If $j>1$, then $\pi(j)$ must be positive, and it is also the largest letter in $\pi$. Also, $L$ will be a word of length less than $n$. It is clear then that \[\bubB(\pi)=\bubB(L\pi(j)R)=\bubB(L)R|\pi(j)|=O(L)R|\pi(j)|=O(\pi).\]

\end{proof}

Next we use this alternate description of $\bubB$ to show that $\mdB$ has the desired property of being equal to $\bscB$.
\begin{prop}\label{bscB=mdB2}
For all signed permutations $\pi$ other than $\id$ we have
\begin{equation}\label{md=bsc1}
\mdB(\bubB(\pi))=\mdB(\pi)-1.
\end{equation}
For all signed permutations $\pi$ we have
\begin{equation}\label{md=bsc2}
\mdB(\pi)=\bscB(\pi).
\end{equation}

\end{prop}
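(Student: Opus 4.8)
The plan is to derive \eqref{md=bsc2} from \eqref{md=bsc1} and to concentrate the real work on \eqref{md=bsc1}. First observe that a word has type B maxdrop equal to $0$ exactly when it is $\id$: vanishing maxdrop forces every entry to be positive with $\pi(i)\ge i$, and such a word must be the identity. Granting \eqref{md=bsc1}, repeated application gives $\mdB(\bubB^{k}(\pi))=\mdB(\pi)-k$ for every $k\le\mdB(\pi)$, so $\bubB^{k}(\pi)\neq\id$ while $k<\mdB(\pi)$, whereas $\bubB^{\mdB(\pi)}(\pi)$ has maxdrop $0$ and hence equals $\id$. Thus $\bscB(\pi)=\mdB(\pi)$, which is \eqref{md=bsc2}.

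To prove \eqref{md=bsc1} I would induct on the length $n$, establishing it for every word $\pi$ in the sense of Remark \ref{bubB defn extend} with $\mdB(\pi)\ge1$. Writing $\pi=L\pi(j)R$ as in Definition \ref{Odef}, equation \eqref{bubdef} gives $\bubB(\pi)=\bubB(L)\,R\,|\pi(j)|$, and I would compare the contribution of each block to the maxdrop before and after. The block $\bubB(L)$ occupies the same positions $1,\dots,j-1$ as $L$, so when $\mdB(L)\ge1$ the inductive hypothesis makes its contribution drop by exactly $1$. The block $R$ is translated one step to the left, and a unit leftward shift lowers by exactly $1$ the quantity ($i-\pi(i)$ for a positive entry, $i$ for a negative one) that each entry contributes to the maximum; hence $R$'s contribution also drops by exactly $1$. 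The peak $|\pi(j)|$ is carried to position $n$, where it contributes the new value $n-|\pi(j)|$. Since $|\pi(j)|=\max\sigma$ for the word $\sigma$ of Definition \ref{Odef}, assembling these facts reduces \eqref{md=bsc1} to the single inequality $n-|\pi(j)|\le\mdB(\pi)-1$; the old peak contribution $j-\pi(j)-1$ never dominates, because $n-|\pi(j)|\ge j-\pi(j)-1$ whenever $j\le n$.

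The crux, and the step I expect to be the main obstacle, is the inequality $\mdB(\pi)\ge n-|\pi(j)|+1$. In type A the corresponding peak is the global maximum $n$, which never produces a drop, so this issue is invisible; in type B the peak $|\pi(j)|$ may be small, so one must instead exhibit a large drop elsewhere. The mechanism is a counting argument. Because $|\pi(j)|=\max\sigma$, every entry satisfies $\pi(i)\le|\pi(j)|$, so none of the values $|\pi(j)|+1,\dots,n$ can appear positively; each must therefore appear as one of the negative entries $-(|\pi(j)|+1),\dots,-n$. These $n-|\pi(j)|$ negative entries avoid position $1$ (there $|\pi(1)|\le|\pi(j)|$) and the peak position $j$, so they fill $n-|\pi(j)|$ distinct positions of $\{2,\dots,n\}\setminus\{j\}$; they cannot all lie at positions $\le n-|\pi(j)|$, so one of them sits at a position $\ge n-|\pi(j)|+1$ and contributes a drop of exactly that size. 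This establishes the required bound.

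It remains to dispatch the boundary situations, which are routine once the key inequality is in hand: the case $j=1$ (so $L=\emptyset$ and $\bubB(\pi)=R\,|\pi(1)|$) runs through the same computation with peak contribution $n-|\pi(1)|$, controlled by the identical counting argument; an already sorted block $L$ contributes $0\le\mdB(\pi)-1$ in place of $\mdB(L)-1$, which is harmless; and when $|\pi(j)|=n$ the peak term $n-|\pi(j)|$ equals $0$ and is dominated. Combining the three block estimates then yields $\mdB(\bubB(\pi))=\mdB(\pi)-1$, completing the induction.
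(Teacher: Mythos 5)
Your proposal is correct and follows essentially the same route as the paper: induction on length (extended to words as in Remark \ref{bubB defn extend}) via the decomposition $\bubB(L\pi(j)R)=\bubB(L)R|\pi(j)|$ from Proposition \ref{bubB alt}, with the same key inequality $\mdB(\pi)\geq n-|\pi(j)|+1$ proved by the same pigeonhole count of the letters exceeding $|\pi(j)|$, which are forced to be negative and to avoid position $1$. The only divergence is cosmetic, in the lower-bound case where the maximum drop sits at the peak position $j$: the paper rules this out by contradiction (forcing $\mdB(\pi)=1$), while you note directly that the new peak drop $n-|\pi(j)|$ dominates the old peak contribution minus one --- just beware that your literal inequality $n-|\pi(j)|\geq j-\pi(j)-1$ fails when $j=1$ and $\pi(1)<0$, where instead the capped drop size $\min\{j-\pi(j),\,j\}=1$ makes the needed bound $n-|\pi(1)|\geq 0$ trivial, as your own boundary-case discussion effectively observes.
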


\begin{proof}


First note that \eqref{md=bsc2} follows easily from \eqref{md=bsc1}, since $\id$ is the only signed permutation whose type B maximum drop is equal to zero. To prove \eqref{md=bsc1}, induct on the length of $\pi$. The base case $n=1$ is trivial. Now assume $\pi\in B_n$ and 
\begin{equation}\label{induct1}
\mdB(\bubB(\sigma))=\mdB(\sigma)-1
\end{equation}
for all signed permutations $\sigma$ of length less than $n$. As in Proposition \ref{bubB alt}, let
\[\bubB(\pi)=\bubB(L\pi(j) R)=\bubB(L)R|\pi(j)|.\]

First we show that $\mdB(\bubB(\pi))\leq\mdB(\pi)-1$. Using \eqref{induct1} on $\bubB(L)$ and the fact that $R$ has been shifted to the left, we see that the largest drop size of $\bubB(\pi)$ among letters from $L$ or $R$, is exactly one less than the largest drop size of $\pi$ among letters from $L$ or $R$. It remains to check that the drop size at position $n$ in $\bubB(\pi)$ is no bigger than $\mdB(\pi)-1$. Indeed, the description of $\bubB$ given in Proposition \ref{bubB alt} implies that the $n-|\pi(j)|$ letters in $\pi$ which have absolute value larger that $|\pi(j)|$, must be negative and cannot appear in position 1. We obtain a lower bound on $\mdB(\pi)$ by assuming these negative letters must occur as far to the left as possible in $\pi$, so 
\begin{equation}\label{con1}
\mdB(\pi)\geq n-|\pi(j)|+1. 
\end{equation}
But the drop size at position $n$ in $\bubB(\pi)$ is precisely $n-|\pi(j)|$, which is less than or equal to $\mdB(\pi)-1$ as desired.

Next we show that $\mdB(\bubB(\pi))\geq\mdB(\pi)-1$. If the maximum drop size of $\pi$ does not occur at position $j$, then the maximum drop size is contributed by a letter from $L$ or $R$. Again using  \eqref{induct1} on $\bubB(L)$ and the fact that $R$ has been shifted to the left, this implies $\mdB(\bubB(\pi))=\mdB(\pi)-1$. If $\pi$ does have a maximum drop size at position $j$, then we claim that $\mdB(\pi)=1$, in which case we are done. Indeed suppose $\pi$ has a drop of maximum size at position $j$ and $2\leq j\leq n$. Then $|\pi(j)|=\pi(j)$ and
\[\mdB(\pi)=j-\pi(j).\]
But then we have a contradiction since
\[\mdB(\pi)=j-\pi_j\leq n-|\pi(j)|<\mdB(\pi),\]
where the last inequality follows from \eqref{con1}. On the other hand, if $j=1$ and $\pi$ has a maximum drop at position $j$, then $\mdB(\pi)=1$.

\end{proof}

\section{Type B maxdrop-restricted descent polynomials}\label{section mdb poly}

In this section we provide the proofs of Theorem \ref{typeB recurrence} and Theorem \ref{typeB explicit}. In particular, we prove Theorem \ref{typeB recurrence} by modifying the proof of Theorem \ref{typeA recurrence} found in \cite{ccdg}. We then explain how Theorem \ref{typeB explicit} is a consequence of Theorem \ref{typeB recurrence} and Theorem \ref{typeA explicit} from \cite{ccdg}.

Suppose we have a finite set $C=\{c_1,c_2,\dots,c_n\}\subset \mathbb{N}$ with $c_1<c_2<\dots<c_n$, and a permutation $\pi$ on $C$. The \textit{standardization} of $\pi$, denoted $\st(\pi)$, is the permutation in $S_n$ obtained from $\pi$ by replacing $c_i$ with $i$. For example $\st(4,5,2,9,7)=2,3,1,5,4$. We will call $\pi$ a signed permutation on the set $C$, if $\pi$ is a word over $\mathbb{Z}$ and $|\pi|$ is permutation on $C$. We define the \textit{signed standardization} of a signed permutation, denoted $\sst(\pi)$, by
\[\sst(\pi)(i)=\begin{cases}
\st(|\pi|)(i) & \text{ if }\pi(i)>0 \\
-\st(|\pi|)(i) & \text{ if }\pi(i)<0
\end{cases}\]
In other words, $\sst(\pi)$ is the standardization of $|\pi|=|\pi(1)|,|\pi(2)|,\dots,|\pi(n)|$, but then put the minus signs back in the same positions. For example $\sst(-5,4,-2,9,7)=-3,2,-1,5,4$. If the set $C$ is fixed, then the inverse of $\sst$, denoted $\sst_C^{-1}$, is well-defined. For example if $C=\{2,4,5,7,9\}$ and $\pi=-3,2,-1,5,4$, then $\sst_C^{-1}(\pi)=-5,4,-2,9,7$. If we use $\eqref{desB def}$ to define the type B descent set for any word over $\mathbb{Z}$, then it is clear that $\sst$ and $\sst_C^{-1}$ preserve type B descent set. For example $\Des_\B(-5,4,-2,9,7)=\{0,2,4\}=\Des_\B(-3,2,-1,5,4)$.

For any $S\subseteq [0,n-1]$, define 
\[\mathcal{B}_{n,k}(S):=\{\pi\in\mathcal{B}_{n,k}:\Des_\B(\pi)\supseteq S\},\]
\[t_n(S):=\max\{i\in\mathbb{N}:[n-i,n-1]\subseteq S\},\]
with $t_n(S)=0$ if $n-1\notin S$. Given any $\pi\in\mathcal{B}_{n,k}(S)$ where $k<n$, define a map $f$ by
\begin{equation}\label{f def}
f(\pi)=(\alpha,X), \text{ where}
\end{equation}
\[\alpha=\sst(\pi(1),\pi(2),\dots,\pi(n-i-1)),\hspace{.15cm}X=\{\pi(n-i),\pi(n-i+1),\dots,\pi(n)\}\]
with $i=t_n(S)$. Note that since $k<n$, $\pi(n)$ must be positive. Moreover, since $[n-i,n-1]\subseteq S\subseteq \Des_\B(\pi)$, we know that $X$ is a set of positive integers.

For example, let $S=\{0,2,6,7\}$ and choose $\pi=-6,2,-1,-3,8,7,5,4\in\mathcal{B}_{8,5}(S)$. Note that $\Des_\B(\pi)=\{0,2,3,5,6,7\}\supset S$, and $\mdB(\pi)=4\leq k=5$. Since $i=t_8(S)=2$, we have $f(\pi)=(\alpha,X)$ where $\alpha=\sst(-6,2,-1,-3,8)=-4,2,-1,-3,5$ and $X=\{4,5,7\}$.

\begin{lemma}\label{image f}
For $0\leq k\leq n-1$ we have
\[f:\mathcal{B}_{n,k}(S)\rightarrow \mathcal{B}_{n-i-1,k}(S\cap [0,n-i-2])\times \binom{[n-k,n]}{i+1}.\]
where $i=t_n(S)$, and ${X\choose m}$ denotes the set of m-element subsets of $X$.
\end{lemma}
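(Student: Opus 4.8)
The plan is to verify that the two coordinates of $f(\pi)=(\alpha,X)$ land in the claimed factors, i.e.\ that $\alpha\in\mathcal{B}_{n-i-1,k}(S\cap[0,n-i-2])$ and that $X\in\binom{[n-k,n]}{i+1}$, where $i=t_n(S)$. I would break this into three checks: (1) $X$ is an $(i+1)$-element subset of $[n-k,n]$; (2) $\Des_\B(\alpha)\supseteq S\cap[0,n-i-2]$; and (3) $\mdB(\alpha)\le k$. The first two are short; the third is the heart of the matter, since standardizing a prefix can only shrink entries and hence can \emph{a priori} enlarge drop sizes.

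For (1), recall that $[n-i,n-1]\subseteq S\subseteq\Des_\B(\pi)$ forces $\pi(n-i)>\pi(n-i+1)>\dots>\pi(n)$, and that (as already noted, using $k<n$) these $i+1$ letters are positive. The smallest of them, $\pi(n)$, has positive drop $n-\pi(n)\le\mdB(\pi)\le k$, so $\pi(n)\ge n-k$; since the others exceed $\pi(n)$ and are at most $n$, all $i+1$ distinct letters lie in $[n-k,n]$, giving $X\in\binom{[n-k,n]}{i+1}$. In particular this forces $i\le k$, so the target set is nonempty.

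For (2), I would use the fact, recorded just before the statement, that $\sst$ preserves the type B descent set. The word $\alpha$ has length $n-i-1$, so $\Des_\B(\alpha)\subseteq[0,n-i-2]$, and by descent-preservation it equals $\Des_\B(\pi)\cap[0,n-i-2]$, since the descents at positions $0,1,\dots,n-i-2$ only involve the letters $\pi(1),\dots,\pi(n-i-1)$ that define $\alpha$. Intersecting $S\subseteq\Des_\B(\pi)$ with $[0,n-i-2]$ then yields the containment.

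The main obstacle is (3), which I would argue position by position. A negative letter of $\alpha$ sits at the same position $m\le n-i-1$ as the corresponding negative letter of $\pi$, so it contributes drop size $m\le\mdB(\pi)\le k$, unchanged. For a positive letter $\pi(m)>0$ with $m\le n-i-1$, the value $\alpha(m)$ is the rank of $\pi(m)$ among the prefix absolute values; since $\{|\pi(1)|,\dots,|\pi(n)|\}=[n]$, this rank equals $\pi(m)-c$, where $c$ counts the suffix letters smaller than $\pi(m)$. The crucial observation is that every suffix letter is $\ge n-k$ by (1), so the suffix letters below $\pi(m)$ all lie in $[n-k,\pi(m)-1]$, whence $c\le\max\{0,\pi(m)-(n-k)\}$. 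Consequently the drop of $\alpha$ at $m$ is $m-\alpha(m)=(m-\pi(m))+c$, which is bounded by $m-\pi(m)\le k$ when $\pi(m)\le n-k$, and by $(m-\pi(m))+(\pi(m)-n+k)=m-n+k\le k-i-1$ when $\pi(m)>n-k$. Either way it is at most $k$, so $\mdB(\alpha)\le k$, completing the verification that $f$ maps into the asserted codomain.
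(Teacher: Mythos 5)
Your proposal is correct and follows essentially the same route as the paper: verify that $X\in\binom{[n-k,n]}{i+1}$ via $\pi(n)\ge n-k$, invoke descent-preservation of $\sst$ for the containment $\Des_\B(\alpha)\supseteq S\cap[0,n-i-2]$, and bound $\mdB(\alpha)$ by a case analysis on the letters, using that all removed suffix letters lie in $[n-k,n]$. Your only deviation is cosmetic: you split the positive-letter case at $n-k$ and track the rank shift $c$ explicitly (getting the slightly sharper bound $k-i-1$), where the paper splits at $\pi(n)$ and argues $\alpha(j)\ge\pi(n)\ge n-k$.
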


\begin{proof}

Let $\pi\in\mathcal{B}_{n,k}(S)$, and let $f(\pi)=(\alpha,X)$. Let $i=t_n(S)$, thus $[n-i,n-1]\subseteq \Des_\B(\pi)$, and
\[\pi(n-i)>\pi(n-i+1)>\dots>\pi(n).\]
Since $\mdB(\pi)\leq k<n$, we must have that $\pi(n)$ is a positive integer, and $\pi(n)\geq n-k$. Therefore $X$ is an $(i+1)$-element subset of $[n-k,n]$.

Clearly $\alpha\in B_{n-i-1}$, and we also need to show that $\mdB(\alpha)\leq k$. Let $j$ be an index such that $1\leq j\leq n-i-1$. If $\pi(j)$ is negative, then $\pi$ has a drop of size $j$ at position $j$. After applying signed standardization to $\pi$, we see that $\alpha(j)$ is also negative, so $\alpha$ has a drop of size $j$ at position $j$. Since the drop size is unchanged, it must be less than or equal to $k$.

If $0<\pi(j)<\pi(n)$, then $\alpha(j)=\pi(j)$ and again the drop size is unchanged.

If $\pi(j)>\pi(n)$, then $\alpha(j)$ will be a smaller than $\pi(j)$, but no smaller than $\pi(n)$. Since $\pi\in\mathcal{B}_{n,k}$, we have $\alpha(j)\geq \pi(n)\geq n-k$. The corresponding drop size in $\alpha$ must satisfy
\[j-\alpha(j)\leq n-(n-k)=k.\]
In all three cases, we have $\mdB(\alpha)\leq k$.

Since signed standardization preserves type B descent sets, the type B descent set of $\alpha$ still contains the part of $S$ that is between $0$ and $n-i-2$, i.e. $\alpha\in\mathcal{B}_{n-i-1,k}(S\cap [0,n-i-2])$.

\end{proof}

Our goal is to show that $f$ is a bijection. To accomplish this, we describe the inverse map of $f$. Let $\pi\in\mathcal{B}_{n-i-1,k}(T)$ where $T\subseteq [0,n-i-2]$. Let $X=\{x_1,\dots,x_{i+1}\}$ be an $(i+1)$-element subset of $[n-k,n]$, where $x_1\leq x_2\leq \dots \leq x_{i+1}$. Define a map $g$ (which we will show is the inverse of $f$) by
\[g(\pi,X)=\sst^{-1}_C(\pi)*(x_{i+1},x_i,\dots , x_1)\]
where $C=[n]\setminus X$, and $*$ denotes concatenation.

For example, let $T=\{2\}$ and let $\pi=-3,1,-4,2,5\in\mathcal{B}_{5,4}\left(T\right)$. Note that $\mdB(\pi)=3\leq k=4$ and $\Des_\B(\pi)=\{0,2\}\supset T$. Chose $i=2$, then $5=n-i-1$ implies that $n=8$, and let $X=\{4,5,7\}\in [n-k,n]=[4,8]$. Then $C=[n]\setminus X=\{1,2,3,6,8\}$, and
\[\begin{array}{lcl}
g(\pi,X)&=&\sst_C^{-1}(-3,1,-4,2,5)*(7,5,4) \\
&=&-3,1,-6,2,8,7,5,4\in\mathcal{B}_{8,4}\left(T\cup [6,7]\right).
\end{array}\]

\begin{lemma}\label{image g}
For $0\leq k\leq n-1$ we have
\[g:\mathcal{B}_{n-i-1,k}(T)\times\binom{[n-k,n]}{i+1}\rightarrow \mathcal{B}_{n,k}(T\cup [n-i,n-1]).\]
\end{lemma}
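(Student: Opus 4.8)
The plan is to verify directly that $g(\pi,X)$ lands in $\mathcal{B}_{n,k}(T\cup[n-i,n-1])$; this breaks into three checks, and it runs parallel to the argument for Lemma \ref{image f}, only in the reverse direction. Write $\rho:=g(\pi,X)=\sst^{-1}_C(\pi)*(x_{i+1},x_i,\dots,x_1)$ with $C=[n]\setminus X$, and recall $|X|=i+1$. First I would confirm $\rho\in B_n$: the block $\sst^{-1}_C(\pi)$ has length $n-i-1$ with absolute values forming $C$, and the appended tail has length $i+1$ with values forming $X$, so $|\rho|$ is a permutation of $C\cup X=[n]$ and $\rho$ has length $n$.

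Next I would handle the descent-set containment. For the tail, the entries $x_{i+1}>x_i>\dots>x_1$ are strictly decreasing (the $x_m$ are distinct), which forces descents at every position in $[n-i,n-1]$. For the part coming from $T$, note $T\subseteq[0,n-i-2]$, so any descent forced by $T$ occurs at a position $j\le n-i-2$ and therefore depends only on entries in the first block; since $\sst^{-1}_C$ preserves type B descent sets and $\Des_\B(\pi)\supseteq T$, these descents survive in $\rho$. Hence $\Des_\B(\rho)\supseteq T\cup[n-i,n-1]$, and we never need to analyze the boundary position $n-i-1$.

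The substantive step is the bound $\mdB(\rho)\le k$, which I would establish position by position in the two blocks. In the first block a negative entry $\pi(j)<0$ lifts to a negative $\sst^{-1}_C(\pi)(j)$ at the same position, keeping its drop size equal to $j$, which is $\le\mdB(\pi)\le k$. A positive entry $\pi(j)>0$ lifts to the $\pi(j)$-th smallest element $c$ of $C$; since the $p$-th smallest element of any subset of $[n]$ is at least $p$, we have $c\ge\pi(j)$, so the drop size $j-c\le j-\pi(j)\le k$ (and if $\pi(j)\ge j$ there is no drop). For the tail, each entry lies in $[n-k,n]$ while its position is at most $n$, so its drop size is at most $n-(n-k)=k$. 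Combining the two blocks gives $\mdB(\rho)\le k$.

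I expect the maxdrop bound to be the only real obstacle; the membership and descent claims are essentially bookkeeping. Within the maxdrop bound, the two inequalities doing the work are $c\ge\pi(j)$ for the lifted positive entries (which guarantees drops can only shrink under $\sst^{-1}_C$, using $\mdB(\pi)\le k$) and $x_m\ge n-k$ for the tail (which is exactly the content of $X\in\binom{[n-k,n]}{i+1}$ and is calibrated so that the forced decreasing tail cannot create a drop exceeding $k$).
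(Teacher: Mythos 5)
Your proposal is correct and takes essentially the same route as the paper's proof: a direct verification that $g(\pi,X)\in B_n$, the maxdrop bound split into the same three cases (negative first-block entries keep drop size $j$, positive first-block entries satisfy $\sst_C^{-1}(\pi)(j)\geq\pi(j)$ so drops can only shrink, and tail entries lie in $[n-k,n]$ so their drops are at most $k$), and the descent containment from descent-preservation of $\sst_C^{-1}$ together with the strictly decreasing tail. The only difference is cosmetic: you justify the inequality $\sst_C^{-1}(\pi)(j)\geq\pi(j)$ explicitly (the $p$-th smallest element of a subset of $[n]$ is at least $p$), which the paper asserts without comment, and you bound each tail position directly rather than observing that the largest tail drop occurs at position $n$.
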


\begin{proof}

Let $\alpha=g(\pi,X)$. Clearly $\alpha\in B_n$, so next we will show that $\mdB(\alpha)\leq k$. Let $j$ be an index such that $1\leq j \leq n-i-1$. If $\alpha(j)$ is negative, then it must be that $\pi(j)$ is also negative. Thus the drop size at position $j$ of $\alpha$, is the same as the drop size at position $j$ of $\pi$, hence less than or equal to $k$. If $\alpha(j)>0$, then $\alpha(j)\geq \pi(j)$. If there is a drop at position $j$ of alpha, then the drop size satisfies
\[j-\alpha(j)\leq j-\pi(j) \leq k.\]

Now let $j$ be an index such that $n-i\leq j \leq n$. Since $\alpha(n-i)>\alpha(n-i+1)>\dots >\alpha(n)$, the largest drop occurs at position $n$. But $\alpha(n)\in [n-k,n]$, so
\[n-\alpha(n)\leq n-(n-k)=k,\]
and $\alpha\in\mathcal{B}_{n,k}$.

Since $\sst_C^{-1}$ preserves type B descent sets, and since $\alpha(n-i)>\alpha(n-i+1)>\dots >\alpha(n)$, it follows that $\des_\B(\alpha)\supset (T\cup [n-i,n-1])$, hence $\alpha\in\mathcal{B}_{n,k}(T\cup [n-i,n-1])$ as desired.

\end{proof}

\begin{lemma}\label{f bijection}
The map $f$ is a bijection, and its inverse is the map $g$.
\end{lemma}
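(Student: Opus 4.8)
The plan is to prove the statement by verifying directly that $g$ and $f$ are two-sided inverses; combined with the codomain computations already established in Lemmas \ref{image f} and \ref{image g}, this immediately yields that $f$ is a bijection. Throughout I fix $S\subseteq[0,n-1]$, set $i=t_n(S)$, and write $T=S\cap[0,n-i-2]$. First I would record the compatibility of the two domains: by maximality in the definition of $t_n$, the index $n-i-1$ lies outside $S$, so $S=T\cup[n-i,n-1]$ and $t_n(T\cup[n-i,n-1])=i$. Hence the maps of the previous two lemmas run between exactly the same pair of sets $\mathcal{B}_{n,k}(S)$ and $\mathcal{B}_{n-i-1,k}(T)\times\binom{[n-k,n]}{i+1}$, and the parameter $i$ used by $f$ after applying $g$ (and vice versa) is this same $i$.

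Next I would verify $g\circ f=\id$. Take $\pi\in\mathcal{B}_{n,k}(S)$ and write $f(\pi)=(\alpha,X)$. Since $[n-i,n-1]\subseteq\Des_\B(\pi)$ and $\pi(n)\geq n-k>0$ (using $k<n$), the final block satisfies $\pi(n-i)>\pi(n-i+1)>\dots>\pi(n)>0$; thus listing $X$ in increasing order gives $x_1=\pi(n),\dots,x_{i+1}=\pi(n-i)$, and the decreasing concatenation $(x_{i+1},\dots,x_1)$ in the definition of $g$ reproduces exactly the tail $\pi(n-i),\dots,\pi(n)$. For the front block, observe that $C=[n]\setminus X$ is precisely the set of absolute values of $\pi(1),\dots,\pi(n-i-1)$, while $\alpha=\sst(\pi(1),\dots,\pi(n-i-1))$; since $\sst_C^{-1}$ inverts $\sst$ on words whose absolute-value set is $C$, I recover $\sst_C^{-1}(\alpha)=\pi(1),\dots,\pi(n-i-1)$. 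Hence $g(f(\pi))=\pi$.

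Then I would verify $f\circ g=\id$. Take $(\pi,X)$ with $\pi\in\mathcal{B}_{n-i-1,k}(T)$ and $X=\{x_1<\dots<x_{i+1}\}$, and set $\alpha=g(\pi,X)$; by Lemma \ref{image g}, $\alpha\in\mathcal{B}_{n,k}(S)$, so $f(\alpha)$ is defined with the same $i$. The last $i+1$ letters of $\alpha$ are $x_{i+1},\dots,x_1$, whose underlying set is $X$, so the second coordinate of $f(\alpha)$ is $X$. The first $n-i-1$ letters of $\alpha$ are $\sst_C^{-1}(\pi)$ with $C=[n]\setminus X$, and applying $\sst$ to these returns $\sst(\sst_C^{-1}(\pi))=\pi$. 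Hence $f(g(\pi,X))=(\pi,X)$.

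The main obstacle I anticipate is bookkeeping rather than conceptual: making airtight that $\sst$ and $\sst_C^{-1}$ are genuinely mutually inverse once $C$ is identified as $[n]\setminus X$, and that $C$ is indeed the absolute-value set of the front block in the $g\circ f$ direction. This rests on the observation that the $n-i-1$ absolute values appearing in the first block, together with the $i+1$ values of $X$, partition $[n]$, which in turn uses that the tail values are positive (guaranteed by $k<n$). Once this partition is pinned down, both composition identities follow formally from the defining property of signed standardization and its inverse, and no further estimates on $\mdB$ are needed, those having already been handled in Lemmas \ref{image f} and \ref{image g}.
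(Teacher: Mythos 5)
Your proposal is correct and follows essentially the same route as the paper: a direct verification that $f$ and $g$ are two-sided inverses, relying on Lemmas \ref{image f} and \ref{image g} for the codomains, on the computation $t_n(T\cup[n-i,n-1])=i$, and on the fact that the decreasing positive tail determines $X$ while $\sst$ and $\sst_C^{-1}$ invert each other on the front block with $C=[n]\setminus X$. Your added bookkeeping (that $n-i-1\notin S$ and that the tail values together with the front block's absolute values partition $[n]$) only makes explicit what the paper's proof uses implicitly, so there is no substantive difference.
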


\begin{proof}

Let
\[(\pi,X)\in\mathcal{B}_{n-i-1,k}(T)\times\binom{[n-k,n]}{i+1}\]
where $T\subseteq [n-i-2]$ and $X=\{x_1\leq \dots \leq x_{i+1}\}$. Let $\alpha=g(\pi,X)\in\mathcal{B}_{n,k}(T\cup [n-i,n-1])$, so
\[\alpha=\sst^{-1}_C(\pi)*(x_{i+1},x_i,\dots , x_1)\]
where $C=[n]\setminus X$. To find $f(\alpha)$, we first compute
\[t_n(T\cup [n-i,n-1])=\max\{j\in\mathbb{N}:[n-j,n-1]\subseteq (T\cup [n-i,n-1])\}=i.\]
So
\[f(g(\pi,X))=f(\alpha)=(\sst(\sst_C^{-1}(\pi)),\{x_{i+1},x_i,\dots,x_1\})=(\pi,X).\]

Now given $\pi\in\mathcal{B}_{n,k}(S)$, let $f(\pi)=(\alpha,X)$ where
\[(\alpha,X)=\left(\sst(\pi(1),\pi(2),\dots,\pi(n-i-1)),\{\pi(n-i),\pi(n-i+1),\dots,\pi(n)\}\right),\]
where $i=t_n(S)$. Since this means that $[n-i,n-1]\subseteq S \subseteq \Des_\B(\pi)$, we know that
\[\pi(n)<\pi(n-1)<\dots<\pi(n-i+1).\]
Thus
\begin{align*}
g(f(\pi))&=g(\alpha,X) \\
&=\sst_{[n]\setminus X}^{-1}(\sst(\pi(1),\pi(2),\dots,\pi(n-i-1)))*\pi(n-i),\pi(n-i+1),\dots,\pi(n) \\
&=(\pi(1),\pi(2),\dots,\pi(n-i-1)*\pi(n-i),\pi(n-i+1),\dots,\pi(n) \\
&=\pi.
\end{align*}

\end{proof}

\begin{cor}\label{size of bnk}
Let $b_{n,k}(S):=|\mathcal{B}_{n,k}(S)|$. For $0\leq k\leq n-1$ we have
\[b_{n,k}(S)=b_{n-i-1,k}(S\cap [0,n-i-2])\binom{k+1}{i+1},\]
where $i=t_n(S)$.

\end{cor}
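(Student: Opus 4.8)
The plan is to obtain this statement as an immediate counting consequence of the bijection established in Lemma~\ref{f bijection}. Since that lemma asserts that
\[f:\mathcal{B}_{n,k}(S)\rightarrow \mathcal{B}_{n-i-1,k}(S\cap [0,n-i-2])\times \binom{[n-k,n]}{i+1}\]
is a bijection (with $i=t_n(S)$), the two sets have the same cardinality. Because the codomain is a Cartesian product, its cardinality factors as the product of the cardinalities of the two factors, so
\[b_{n,k}(S)=|\mathcal{B}_{n,k}(S)|=\left|\mathcal{B}_{n-i-1,k}(S\cap [0,n-i-2])\right|\cdot\left|\binom{[n-k,n]}{i+1}\right|.\]

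Next I would identify each factor. The first factor is $b_{n-i-1,k}(S\cap [0,n-i-2])$ directly from the definition $b_{m,k}(T):=|\mathcal{B}_{m,k}(T)|$ introduced in the statement of the corollary. For the second factor, I would count $(i+1)$-element subsets of the interval $[n-k,n]$. The key observation is simply that this interval contains exactly $n-(n-k)+1=k+1$ integers, so the number of its $(i+1)$-element subsets is $\binom{k+1}{i+1}$. Multiplying the two factors yields
\[b_{n,k}(S)=b_{n-i-1,k}(S\cap [0,n-i-2])\binom{k+1}{i+1},\]
which is the claim.

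There is no genuine obstacle here: all of the combinatorial work has already been carried out in Lemmas~\ref{image f}, \ref{image g}, and~\ref{f bijection}, which together verify that $f$ and $g$ are mutually inverse maps landing in the stated domains. The only point requiring any care is the elementary count $|[n-k,n]|=k+1$, which converts the formal binomial-coefficient-of-a-set notation $\binom{[n-k,n]}{i+1}$ into the numerical coefficient $\binom{k+1}{i+1}$; this is exactly the step that was already flagged in the proof of Lemma~\ref{image f} when observing that $X$ is an $(i+1)$-element subset of $[n-k,n]$. The hypothesis $k\le n-1$ guarantees $i=t_n(S)\le k$ (since any descent run forces the final values into $[n-k,n]$), so the binomial coefficient is nonzero and the statement is nonvacuous.
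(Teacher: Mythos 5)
Your proposal is correct and is essentially the paper's own (implicit) argument: the corollary is stated immediately after Lemma~\ref{f bijection} precisely because the cardinality identity, together with the count $\left|\binom{[n-k,n]}{i+1}\right|=\binom{k+1}{i+1}$ from $|[n-k,n]|=k+1$, is all that is needed. One small caveat: your parenthetical claim that $k\le n-1$ forces $t_n(S)\le k$ is not quite right since $t_n(S)$ depends only on $S$, but this is harmless --- if $i>k$ then $\mathcal{B}_{n,k}(S)$ is empty (the final descent run would force $i+1$ distinct values into $[n-k,n]$) and $\binom{k+1}{i+1}=0$, so the identity holds trivially.
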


\begin{thm}\label{typeB recurrence normal}
For $n\geq 0$,
\[B_{n,k}(x)=\sum_{j=1}^{k+1}\binom{k+1}{j}(x-1)^{j-1}B_{n-j,k}(x),\]
with initial conditions $B_{j,k}(x)=B_j(x)$ (the type B Eulerian polynomial) for $0\leq j \leq k$. By replacing $n$ by $n+k+1$, one obtains the recurrence in Theorem \ref{typeB recurrence}.
\end{thm}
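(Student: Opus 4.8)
The plan is to adapt the proof of Theorem \ref{typeA recurrence} from \cite{ccdg}, feeding in the signed count of Corollary \ref{size of bnk} in place of its type A analog. The whole argument rests on the elementary transfer identity
\[
B_{n,k}(x)=\sum_{T\subseteq[0,n-1]}(x-1)^{|T|}\,b_{n,k}(T),\qquad(\star)
\]
which I would establish first. Starting from $B_{n,k}(x)=\sum_{\pi\in\mathcal{B}_{n,k}}x^{\des_\B(\pi)}$ and writing $\des_\B(\pi)=|\Des_\B(\pi)|$, the binomial theorem gives $x^{|\Des_\B(\pi)|}=\big(1+(x-1)\big)^{|\Des_\B(\pi)|}=\sum_{T\subseteq\Des_\B(\pi)}(x-1)^{|T|}$. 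Interchanging the two sums and recalling $b_{n,k}(T)=|\{\pi\in\mathcal{B}_{n,k}:\Des_\B(\pi)\supseteq T\}|$ produces $(\star)$; note that it holds at every index, which is exactly what lets me apply it again at a smaller instance.

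Next I would sort the sum in $(\star)$ by the length $i=t_n(T)$ of the maximal trailing block $[n-i,n-1]\subseteq T$. Terms with $i>k$ contribute nothing, since Corollary \ref{size of bnk} makes $b_{n,k}(T)$ a multiple of $\binom{k+1}{i+1}=0$; I may therefore restrict to $0\le i\le k$, where $n-i-1\ge 0$ (using $n\ge k+1$). For a fixed such $i$, the sets with $t_n(T)=i$ are exactly $T=T'\cup[n-i,n-1]$ for an arbitrary $T'\subseteq[0,n-i-2]$, the map $T\mapsto T'=T\cap[0,n-i-2]$ being a bijection onto all of $2^{[0,n-i-2]}$ (one checks $n-i-1\notin T$, so no longer trailing block occurs), and $|T|=|T'|+i$. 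Corollary \ref{size of bnk} gives $b_{n,k}(T)=\binom{k+1}{i+1}\,b_{n-i-1,k}(T')$. Substituting and pulling out the $i$-dependent factors yields
\[
B_{n,k}(x)=\sum_{i=0}^{k}\binom{k+1}{i+1}(x-1)^{i}\sum_{T'\subseteq[0,n-i-2]}(x-1)^{|T'|}\,b_{n-i-1,k}(T').
\]
By $(\star)$ applied at index $n-i-1$, the inner sum is exactly $B_{n-i-1,k}(x)$, and reindexing by $j=i+1$ gives the claimed recurrence
\[
B_{n,k}(x)=\sum_{j=1}^{k+1}\binom{k+1}{j}(x-1)^{j-1}B_{n-j,k}(x).
\]

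This derivation is valid exactly where Corollary \ref{size of bnk} applies, namely for $n\ge k+1$; for $0\le n\le k$ one has $\mathcal{B}_{n,k}=B_n$ (as noted in Section \ref{section intro}), so $B_{n,k}(x)=B_n(x)$, which supplies the stated initial conditions, and the closing assertion is the trivial reindexing $n\mapsto n+k+1$ matching Theorem \ref{typeB recurrence}. I expect no genuine obstacle: the signed-permutation content is entirely packaged in Corollary \ref{size of bnk}, and the passage to the recurrence is a formal manipulation identical in shape to the type A argument of \cite{ccdg}. The one point that demands care is the bookkeeping of the trailing block—checking that $T\mapsto T\cap[0,n-i-2]$ sweeps out every subset of $[0,n-i-2]$ as $T$ ranges over the sets with $t_n(T)=i$, so that the inner sum is a \emph{complete} copy of $(\star)$ at the lower index rather than a restricted one, and that the $i$ forced trailing descents contribute precisely the factor $(x-1)^{i}$, cleanly separated from the prefix term $B_{n-i-1,k}(x)$.
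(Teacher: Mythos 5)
Your proposal is correct and follows essentially the same route as the paper's proof: the binomial-theorem transfer identity, stratification of $S\subseteq[0,n-1]$ by the trailing-block statistic $t_n(S)$, Corollary \ref{size of bnk}, and reassembly of the inner sum as the lower-index polynomial. The only cosmetic difference is that you work directly with $(x-1)^{|T|}$ where the paper expands $B_{n,k}(x+1)$ in powers of $x$ and substitutes $x\mapsto x-1$ at the end; your explicit check that the terms with $t_n(T)>k$ vanish is a point the paper passes over more quickly.
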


\begin{proof}
First we express $B_{n,k}(x+1)$ in terms of $b_{n,k}$ as follows:

\begin{align*}
B_{n,k}(x+1)&=\sum_{\pi\in \mathcal{B}_{n,k}}(x+1)^{\des_\B(\pi)}  \\
&=\sum_{\pi\in \mathcal{B}_{n,k}}\sum_{j=0}^{\des_\B(\pi)}\binom{\des_\B(\pi)}{j}x^j \\
&=\sum_{\pi\in \mathcal{B}_{n,k}}\sum_{S\subseteq \Des_\B(\pi)}x^{|S|} \\
&=\sum_{S\subseteq [0,n-1]}x^{|S|}\sum_{\pi\in \mathcal{B}_{n,k}(S)}1\\
&=\sum_{S\subseteq [0,n-1]}b_{n,k}(S)x^{|S|}
\end{align*}

Now take Corollary \ref{size of bnk}, multiply both sides by $x^{|S|}$ and sum over all $S\subseteq [0,n-1]$:
\begin{align*}
B_{n,k}(x+1)&=\sum_{S\subseteq [0,n-1]}x^{|S|}b_{n-t_n(S)-1,k}(S\cap [0,n-t_n(S)-2])\binom{k+1}{t_n(S)+1} \\
&=\sum_{i\geq 0}\sum_{\substack{S\subseteq [0,n-1] \\ t_n(S)=i}}x^{|S|}b_{n-i-1,k}(S\cap [0,n-i-2])\binom{k+1}{i+1} \\
&=\sum_{i\geq 0}\binom{k+1}{i+1}x^i\sum_{\substack{S\subseteq [0,n-1] \\ t_n(S)=i}}x^{|S|-i}b_{n-i-1,k}(S\cap [0,n-i-2])
\end{align*}
Recall that if $t_n(S)=i$, then $S\supseteq [n-i,n-1]$ and $n-i-1\notin S$. Therefore each such $S$ can be expressed as $S=T\cup [n-i,n-1]$ for some $T\subseteq [0,n-i-2]$. Thus 
\begin{align*}
B_{n,k}(x+1)&=\sum_{i\geq 0}\binom{k+1}{i+1}x^i\sum_{\substack{T\subseteq [0,n-i-2] \\ }}x^{|T|}b_{n-i-1,k}(T)\\
&=\sum_{i\geq 0}\binom{k+1}{i+1}x^i B_{n-i-1,k}(x+1)\\
&=\sum_{i\geq 1}\binom{k+1}{i}x^{i-1} B_{n-i,k}(x+1)\\
\end{align*}
Now we simply replace $x$ by $x-1$, and note that $\binom{k+1}{i}=0$ for $i>k+1$.

\end{proof}

The following corollary also appears in Theorem \ref{typeB recurrence}.

\begin{cor}\label{typeB generating}
We have
\[\sum_{n\geq 0}B_{n,k}(x)z^n
=\frac{1+\sum_{t=1}^k\left(B_t(x)-\sum_{i=1}^t{k+1 \choose i}(x-1)^{i-1}B_{t-i}(x)\right)z^t}
{1-\sum_{i=1}^{k+1}{ k+1 \choose i}z^i(x-1)^{i-1}}.\]
\end{cor}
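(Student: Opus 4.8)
The plan is to derive the generating function purely algebraically from the recurrence already established in Theorem~\ref{typeB recurrence normal}, treating the sequence $\{B_{n,k}(x)\}_{n\geq 0}$ as known to satisfy that linear recurrence with the stated initial conditions $B_{j,k}(x)=B_j(x)$ for $0\leq j\leq k$. Since $k$ is fixed throughout, I will suppress it and work with the formal power series $F(z):=\sum_{n\geq 0}B_{n,k}(x)z^n$, viewing everything as a power series in $z$ with coefficients in $\mathbb{Z}[x]$. The recurrence
\[
B_{n,k}(x)=\sum_{j=1}^{k+1}\binom{k+1}{j}(x-1)^{j-1}B_{n-j,k}(x)
\]
is valid precisely when $n\geq k+1$ (this is the range where $n-j\geq 0$ for all $j$ up to $k+1$ and where the formula was proved, i.e. after the reindexing $n\mapsto n+k+1$ with $n\geq 0$). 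So the first task is to pin down the exact range of validity, because the low-order terms $n=0,1,\dots,k$ are governed by the initial conditions rather than by the recurrence, and these are exactly what will produce the correction terms in the numerator.

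\emph{First} I would introduce the denominator polynomial $D(z):=1-\sum_{i=1}^{k+1}\binom{k+1}{i}z^i(x-1)^{i-1}$ and compute the product $D(z)F(z)$ by collecting, for each $n$, the coefficient of $z^n$. \emph{Next} I would observe that the coefficient of $z^n$ in $D(z)F(z)$ is exactly
\[
B_{n,k}(x)-\sum_{i=1}^{k+1}\binom{k+1}{i}(x-1)^{i-1}B_{n-i,k}(x),
\]
with the convention $B_{m,k}(x)=0$ for $m<0$. By the recurrence this vanishes for all $n\geq k+1$, so $D(z)F(z)$ is a polynomial in $z$ of degree at most $k$. \emph{Then} the content of the theorem reduces to identifying that polynomial: for $0\leq n\leq k$ I would substitute the initial condition $B_{n,k}(x)=B_n(x)$ into the coefficient above, and also replace $B_{n-i,k}(x)$ by $B_{n-i}(x)$ whenever $n-i\geq 0$ (which holds throughout this low range since $n\leq k$). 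This yields coefficient of $z^0$ equal to $B_0(x)=1$, and for $1\leq t\leq k$ the coefficient of $z^t$ equal to $B_t(x)-\sum_{i=1}^{t}\binom{k+1}{i}(x-1)^{i-1}B_{t-i}(x)$, matching the numerator in the statement exactly. Dividing by $D(z)$ then gives the claimed formula.

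\emph{The main obstacle} I expect is bookkeeping at the boundary: ensuring the index range on the inner sum truncates correctly at $i=t$ rather than $i=k+1$. For $1\leq t\leq k$ the terms with $t<i\leq k+1$ involve $B_{t-i,k}(x)$ with negative subscript, which is $0$ by convention, so the upper limit genuinely drops from $k+1$ to $t$; I must state this convention explicitly and verify that no term with a valid nonnegative subscript is accidentally discarded. A secondary subtlety is that the recurrence as proved in Theorem~\ref{typeB recurrence normal} is stated ``for $n\geq 0$'' after the shift $n\mapsto n+k+1$, so I should confirm it indeed holds for every $n\geq k+1$ in the unshifted indexing before asserting that all coefficients of $z^n$ in $D(z)F(z)$ vanish for $n\geq k+1$. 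Once these two points are handled, the derivation is a routine manipulation of formal power series and no convergence or analytic issues arise.
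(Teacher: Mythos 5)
Your proposal is correct and is essentially the paper's own argument in mirror form: the paper substitutes the recurrence into the tail $\sum_{n\geq k+1}B_{n,k}(x)z^n$ and solves for the series $I$, which is the same computation as your multiplying $F(z)$ by $D(z)$ and checking that all coefficients of $z^n$ vanish for $n\geq k+1$ while the coefficients for $0\leq n\leq k$ reproduce the numerator. Your attention to the range of validity of the recurrence ($n\geq k+1$) and to the truncation of the inner sum at $i=t$ matches exactly the boundary bookkeeping done in the paper's proof.
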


\begin{proof}

Let
\[I=\sum_{n\geq 0}B_{n,k}(x)z^n,\]
then
\begin{align*}
I&=\sum_{n=0}^{k}B_n(x)z^n+\sum_{n\geq k+1}B_{n,k}(x)z^n \\
&=\sum_{n=0}^{k}B_n(x)z^n+\sum_{n\geq k+1}\sum_{i=1}^{k+1}\binom{k+1}{i}(x-1)^{i-1}B_{n-i,k}(x)z^n \\
&=\sum_{n=0}^{k}B_n(x)z^n+\sum_{i=1}^{k+1}\binom{k+1}{i}(x-1)^{i-1}z^i\sum_{n\geq k+1}B_{n-i,k}(x)z^{n-i} \\
&=\sum_{n=0}^{k}B_n(x)z^n+\sum_{i=1}^{k+1}\binom{k+1}{i}(x-1)^{i-1}z^i
\left(I-\sum_{j=0}^{k-i}B_{j}(x)z^j\right).\\
\end{align*}
Next we bring the term containing $I$ over to the left. If we set
\[D=1-\sum_{i=1}^{k+1}\binom{k+1}{i}(x-1)^{i-1}z^i,\]
then
\begin{align*}
ID&=
\sum_{n=0}^{k}B_n(x)z^n-\sum_{i=1}^{k+1}\binom{k+1}{i}(x-1)^{i-1}z^i\sum_{j=0}^{k-i}B_{j}(x)z^j \\
&=1+\sum_{n=1}^{k}B_n(x)z^n-\sum_{i=1}^{k+1}\sum_{j=0}^{k-i}\binom{k+1}{i}(x-1)^{i-1}B_{j}(x)z^{i+j} \\
&=1+\sum_{n=1}^{k}B_n(x)z^n-\sum_{i=1}^{k+1}\sum_{t=i}^{k}\binom{k+1}{i}(x-1)^{i-1}B_{t-i}(x)z^{t} \\
&=1+\sum_{n=1}^{k}B_n(x)z^n-\sum_{t=1}^{k}\sum_{i=1}^{t}\binom{k+1}{i}(x-1)^{i-1}B_{t-i}(x)z^{t} \\
&=1+\sum_{t=1}^k\left(B_t(x)-\sum_{i=1}^t{k+1 \choose i}(x-1)^{i-1}B_{t-i}(x)\right)z^t. \\
\end{align*}
And now divide by D.
\end{proof}

In comparing the recurrence found in our Theorem \ref{typeB recurrence} with the recurrence from Theorem \ref{typeA recurrence} from \cite{ccdg}, we see that $B_{n,k}(x)$ satisfies the same recurrence as $A_{n,k}(x)$, but with different initial conditions. In \cite{ccdg}, the authors use Theorem \ref{typeA recurrence} to obtain an explicit formula for $A_{n,k}(x)$ (see Theorem \ref{typeA explicit}), but after what they describe as a "fierce battle" with these polynomials. Fortunately for us, a careful check of their proof of Theorem \ref{typeA explicit} shows that it depends only on the recurrence and the initial conditions. To be more specific, we re-state Theorem \ref{typeA explicit} in generality as the following Lemma.

\begin{lemma}[{\cite[Theorem 2]{ccdg}}]\label{battle}
Fix $k\in\mathbb{N}$. For $0\leq i\leq k$, let $\{C_{i,k}(x)\}$ be any collection of polynomials. For $n\geq 0$, define $C_{n+k+1,k}(x)$ recursively by
\begin{equation}\label{desired recurrence}
C_{n+k+1,k}(x):=\sum_{i=1}^{k+1}{k+1 \choose i}(x-1)^{i-1}C_{n+k+1-i,k}(x).
\end{equation}
Then
\[C_{n,k}(x)=\sum_{d\geq 0}\beta_k((k+1)d)x^{d},\]
where
\[\sum_{j\geq 0}\beta_k(j)u^j=O_k(u)\left(\frac{1-u^{k+1}}{1-u}\right)^{n-k},\]
with
\[O_k(u)=\sum_{j=0}^{k}C_{k-j,k}(u^{k+1})(u^{k+1}-1)^j\sum_{i=j}^{k}\binom{i}{j}u^{-i}.\]
\end{lemma}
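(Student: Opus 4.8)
The plan is to recast the claim as a single family of identities between Laurent series in one variable $u$ and to prove it by an induction driven by the recurrence \eqref{desired recurrence}. Write $P=P(u):=1+u+\dots+u^k=\frac{1-u^{k+1}}{1-u}$ and $Q=Q(u):=u^{k+1}-1$, set $G_n(u):=O_k(u)\,P^{\,n-k}$ (read via the invertibility of $P$ as a power series when $n<k$), and let $\Phi$ denote the linear operator on Laurent series that retains exactly the monomials whose exponent is divisible by $k+1$. Since $\sum_j\beta_k(j)u^j=G_n(u)$, the assertion $C_{n,k}(x)=\sum_d\beta_k((k+1)d)x^d$ is, after the substitution $x=u^{k+1}$, equivalent to the identities
\[\Phi\!\left[G_n(u)\right]=C_{n,k}(u^{k+1}),\qquad n\ge 0,\]
and it is these I would prove.

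Because $Q$ and each $C_{m,k}(u^{k+1})$ are power series in $u^{k+1}$, they commute with $\Phi$; and substituting $x=u^{k+1}$ into \eqref{desired recurrence} shows $C_{n,k}(u^{k+1})$ satisfies the recurrence with coefficients $\binom{k+1}{i}Q^{i-1}$. I would first verify that $\Phi[G_n]$ satisfies the same recurrence. Using $G_{n+k+1-i}=P^{\,k+1-i}G_n$ and pulling the factors $Q^{i-1}$ out of the filter, this reduces to the exact polynomial identity
\[P^{k+1}=\sum_{i=1}^{k+1}\binom{k+1}{i}Q^{i-1}P^{\,k+1-i},\]
which follows at once from the key observation $P+Q=u+u^2+\dots+u^{k+1}=uP$: indeed $\sum_{i=1}^{k+1}\binom{k+1}{i}Q^{i}P^{\,k+1-i}=(P+Q)^{k+1}-P^{k+1}=(u^{k+1}-1)P^{k+1}=Q\,P^{k+1}$, and dividing by $Q$ gives the identity. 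Multiplying by $G_n$ and applying $\Phi$ then yields the recurrence for $\Phi[G_n]$ for every $n\ge 0$.

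The remaining step, which I expect to be the real obstacle, is to match the $k+1$ initial values $\Phi[O_k(u)P^{\,n-k}]=C_{n,k}(u^{k+1})$ for $0\le n\le k$; this is precisely where the particular shape of $O_k(u)$ is forced. Writing $n=k-l$ and $R_j(u):=\sum_{i=j}^k\binom{i}{j}u^{-i}$, and again pulling the series $C_{k-j,k}(u^{k+1})Q^j$ out of $\Phi$, the goal collapses to the orthogonality relation $Q^j\,\Phi[R_j(u)P^{-l}]=\delta_{jl}$ for $0\le j,l\le k$, which makes the sum $\sum_j C_{k-j,k}(u^{k+1})\bigl(Q^j\Phi[R_jP^{-l}]\bigr)$ telescope to $C_{k-l,k}(u^{k+1})$ termwise. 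Since $P^{-l}=(1-u)^l(1-u^{k+1})^{-l}$ and $(1-u^{k+1})^{-l}$ passes through $\Phi$, and since the surviving exponents of the Laurent polynomial $R_j(u)(1-u)^l$ all lie in $[-k,k]$ so that only its constant term is a multiple of $k+1$, the relation reduces to a binomial identity equivalent to
\[\sum_{i=j}^{l}(-1)^i\binom{i}{j}\binom{l}{i}=(-1)^l\delta_{jl},\]
this sum being the constant term $[u^0]\,R_j(u)(1-u)^l$. I would obtain it from $\binom{l}{i}\binom{i}{j}=\binom{l}{j}\binom{l-j}{i-j}$ followed by $\sum_m(-1)^m\binom{l-j}{m}=0$ when $l>j$. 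Granting the $k+1$ base cases, induction through the recurrence gives $\Phi[G_n]=C_{n,k}(u^{k+1})$ for all $n$, which is the desired formula. The only delicate bookkeeping is in this last step — tracking which exponents survive $\Phi$ and aligning the index ranges in the double sum defining $O_k(u)$ — everything else being dictated by the clean relation $P+Q=uP$.
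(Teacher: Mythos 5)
Your proposal is correct --- I checked each computation --- and it takes a genuinely different route from the paper, for the simple reason that the paper contains no internal proof of Lemma~\ref{battle} at all: the author proves it by citation, observing that the proof of Theorem~2 in \cite{ccdg} (the outcome of what those authors call a ``fierce battle'') depends only on the recurrence \eqref{desired recurrence} and the initial values $C_{0,k}(x),\dots,C_{k,k}(x)$, and so transfers verbatim from the $A_{n,k}$ setting to any sequence satisfying the same recursion. You instead supply a self-contained argument: with $P=\frac{1-u^{k+1}}{1-u}$, $Q=u^{k+1}-1$, $G_n=O_kP^{\,n-k}$, and $\Phi$ the filter retaining exponents divisible by $k+1$, the claim becomes $\Phi[G_n]=C_{n,k}(u^{k+1})$; the relation $P+Q=uP$ gives $\sum_{i=1}^{k+1}\binom{k+1}{i}Q^{i-1}P^{\,k+1-i}=P^{k+1}$ (your derivation via $(P+Q)^{k+1}-P^{k+1}=QP^{k+1}$ is valid, and division by $Q$ is harmless since this is an identity in $\mathbb{Z}[u]$), so $\Phi[G_n]$ satisfies the $x=u^{k+1}$ form of \eqref{desired recurrence}, the interchange with $\Phi$ being legitimate because multiplication by any series in $u^{k+1}$ preserves exponent classes mod $k+1$; and the $k+1$ base cases reduce, exactly as you say, to the orthogonality $Q^j\Phi[R_jP^{-l}]=\delta_{jl}$, which your exponent bookkeeping (the support of $R_j(u)(1-u)^l$ lies in $[-k,k]$, so only the constant term is a multiple of $k+1$) collapses to $\sum_{i=j}^{l}(-1)^i\binom{i}{j}\binom{l}{i}=(-1)^l\delta_{jl}$, correctly proved from $\binom{l}{i}\binom{i}{j}=\binom{l}{j}\binom{l-j}{i-j}$; the final sign check $Q^l(1-u^{k+1})^{-l}(-1)^l=1$ also works out, and strong induction through the recurrence finishes the proof. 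As to what each approach buys: the paper's citation keeps the exposition short but leaves the lemma resting on an external and reportedly arduous computation, whereas your argument makes the result independent of \cite{ccdg} and is arguably more streamlined than the original battle; it also handles $n<k$ uniformly (reading $P^{\,n-k}$ in the formal Laurent series ring, where $P$ is invertible because its constant term is $1$) and proves the slightly stronger fact that $\beta_k((k+1)d)=0$ for $d<0$ --- a point the statement's ``$\sum_{j\ge 0}$'' quietly elides, since $O_k(u)$ has negative exponents. The only presentational gap is that you should state explicitly that all manipulations take place in formal Laurent series over the coefficient ring of the $C_{i,k}$ and that $\Phi$ is linear and commutes with multiplication by series in $u^{k+1}$, facts you use repeatedly; but these are routine, and the proof is complete as sketched.
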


\begin{proof}[Proof of Theorem \ref{typeB explicit}]
From Theorem \ref{typeB recurrence}, $B_{n,k}(x)$ satisfies the recurrence \eqref{desired recurrence} in Lemma \ref{battle}, with initial conditions $B_{i,k}(x)=B_i(x)$ for $0\leq i\leq k$. The result follows immediately from this.

\end{proof}


The polynomial $P_k(u)$ appearing in Theorem \ref{typeA explicit}, defined by
\[P_k(u)=\sum_{j=0}^{k}A_{k-j}(u^{k+1})(u^{k+1}-1)^j\sum_{i=j}^{k}\binom{i}{j}u^{-i},\]
has symmetric and unimodal coefficients (\cite[Theorem 4]{ccdg}). Recall that any sequence $a_0,a_1,\dots,a_n$ is called symmetric if $a_i=a_{n-i}$ for $0\leq i\leq n$. A sequence is called unimodal if there exists an index $0\leq j\leq n$ such that $a_0\leq a_1\leq\dots\leq a_j$ and $a_j\geq a_{j+1}\geq\dots\geq a_n$. We also say that a polynomial has a certain property, such as symmetric or unimodal, if its sequence of coefficients has that property. The proof that $P_k(u)$ is symmetric and unimodal uses the well known generating function for the type A Eulerian polynomials
\[\sum_{n\geq 0}A_n(u)\frac{z^n}{n!}=\frac{1-u}{e^{(u-1)z}-u},\]
which goes back to Euler (see \cite{knuth}). 

We remark here that the expression $Q_k(u)$ appearing in Theorem \ref{typeB explicit}, defined by
\[Q_k(u)=\sum_{j=0}^{k}B_{k-j}(u^{k+1})(u^{k+1}-1)^j\sum_{i=j}^{k}\binom{i}{j}u^{-i},\]
does not have symmetric coefficients. However, using MAPLE we have checked that for $k\leq 7$, $Q_k(u)$ is a unimodal polynomial. We list the sequence of coefficients, starting with the constant term, for the first few $Q_k(u)$ here:
\[\begin{tabular}{c|l}
$k$ & coefficients of $Q_k(u)$ \\
\hline
0 & 1 \\
1 & $1,2,1$ \\
2 & $1,4,6,6,4,2,1$\\
3 & $1,8,12,18,23,32,32,28,23,8,4,2,1$\\
4 & $1,16,24,36,54,76,176,200,220,230,230,176,152,124,98,76,16,8,4,2,1$
\end{tabular}\]

\begin{con}\label{unimodality}
For $k\geq 0$, $Q_k(u)$ is a unimodal polynomial.
\end{con}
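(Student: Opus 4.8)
The plan is to exploit the fact, visible directly from the definitions, that $Q_k(u)$ is produced by exactly the same linear recipe as $P_k(u)$, merely fed the type B Eulerian polynomials in place of the type A ones. Setting $g_j(u):=(u^{k+1}-1)^j\sum_{i=j}^k\binom{i}{j}u^{-i}$, we have simultaneously
\[P_k(u)=\sum_{m=0}^k A_m(u^{k+1})\,g_{k-m}(u),\qquad Q_k(u)=\sum_{m=0}^k B_m(u^{k+1})\,g_{k-m}(u),\]
with the \emph{same} coefficient functions $g_j$. The passage from the recurrence to the explicit formula has already been made input-agnostic (Lemma \ref{battle} is stated for arbitrary initial polynomials), so that half transfers for free. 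What remains is to adapt the structural result \cite[Theorem 4]{ccdg}, which asserts that $P_k$ is symmetric and unimodal. My first step is therefore to reopen that proof and isolate precisely which properties of the inputs $A_m(u^{k+1})$ it actually invokes.

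I expect a clean split between the two conclusions. The palindromicity of $P_k$ clearly descends from the palindromicity of each $A_m$, and this feature is genuinely destroyed for $Q_k$: although the $B_m$ are themselves palindromic, the shifts introduced by the $g_j$ break the global symmetry once the initial segment $B_0,\dots,B_k$ replaces $A_0,\dots,A_k$, and indeed the tabulated coefficients of $Q_k$ are visibly non-palindromic. The unimodality conclusion, by contrast, I hope rests only on log-concavity or unimodality of the inputs together with the combinatorial bookkeeping of the $g_j$. Since the type B Eulerian polynomials are classically real-rooted, hence log-concave and unimodal, any rephrasing of the unimodality half of \cite{ccdg} that demands only these input properties would apply to $Q_k$ verbatim, with symmetry the sole casualty.

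The hard part is that this clean separation almost certainly fails as stated, because \emph{the output $Q_k$ is not log-concave}: for $k=3$ the string $1,8,12,18,23,32,32,28,23,8,4,2,1$ violates $a_4^2\ge a_3a_5$ (here $23^2=529<18\cdot32=576$), so no real-rootedness argument can survive, and in fact $P_k$ itself is not real-rooted (for instance $P_2(u)=1+u+2u^2+u^3+u^4$ has only complex roots). Thus \cite{ccdg} must be extracting unimodality of $P_k$ \emph{through} its symmetry — for example by controlling the coefficients only up to the centre and invoking palindromicity for the remainder — and it is exactly this symmetry crutch that $Q_k$ denies us. The real obstacle, then, is to supply a symmetry-free unimodality argument that governs the delicate peak region, where, as the $k=3$ data shows, consecutive coefficients are nearly equal and even locally break log-concavity.

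To attack this I would pursue two complementary routes. The analytic route is to derive, from the type B Eulerian generating function $\sum_{n\ge0}B_n(t)\frac{z^n}{n!}=\frac{(t-1)e^{(t-1)z}}{t-e^{2(t-1)z}}$, a closed form or $z$-generating function for $Q_k(u)$ paralleling the treatment of $P_k$, and then to exhibit $Q_k(u)$ as a nonnegative combination $\sum_\ell c_{k,\ell}\,u^{a_\ell}(1+u+\cdots+u^{b_\ell})$ of blocks whose supports are nested about a common centre; such a decomposition forces unimodality with no appeal to symmetry. The combinatorial route is to find a statistic model for the coefficients $\gamma_k(j)$ of $Q_k$ — most naturally through the $2$-colored juggling sequences of Section \ref{section juggling} — in which a single insert-a-throw operation injects the objects counted by $\gamma_k(j)$ into those counted by $\gamma_k(j+1)$ below the peak and reverses the injection above it. Locating the peak and establishing the two injections is where I expect the genuine difficulty to concentrate, and a successful resolution along either route would settle Conjecture \ref{unimodality}.
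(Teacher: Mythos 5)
You should be aware at the outset that the statement you were asked to prove is Conjecture \ref{unimodality}: the paper itself contains \emph{no proof} of it, only MAPLE verification for $k\leq 7$ together with the remarks that $Q_k(u)$ fails to be symmetric and (for $3\leq k\leq 7$) fails to be log-concave. Your situational analysis is accurate and in fact reproduces the paper's own observations: you correctly note that Lemma \ref{battle} is input-agnostic so the recurrence-to-explicit-formula step transfers from $A_{n,k}$ to $B_{n,k}$ for free; you correctly identify that the unimodality of $P_k(u)$ in \cite[Theorem 4]{ccdg} cannot be reached through real-rootedness (your factorization-free observation that $P_2(u)=1+u+2u^2+u^3+u^4$ has no real roots is right, since $P_2(u)=(u^2+1)(u^2+u+1)$); and your log-concavity counterexample $23^2=529<18\cdot 32=576$ in $Q_3$ is precisely the one the paper gives. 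So the diagnosis — that symmetry is the crutch supporting the type A unimodality proof and that $Q_k$ kicks it away — is sound and consistent with everything in the paper.

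The genuine gap is that nothing beyond this diagnosis is established: your submission is a research program, not a proof. Neither of your two routes is executed. On the analytic route, you do not derive the proposed closed form for $Q_k(u)$ from the type B Eulerian generating function, and — more importantly — you do not exhibit the claimed decomposition $Q_k(u)=\sum_\ell c_{k,\ell}\,u^{a_\ell}(1+u+\cdots+u^{b_\ell})$ with $c_{k,\ell}\geq 0$ and nested supports; without a common center forced by symmetry it is not even clear what the candidate centers $a_\ell+b_\ell/2$ should be, and the near-equal coefficients around the peak of $Q_3$ (the string $\dots,23,32,32,28,\dots$) show that any such decomposition must be chosen with real care, since a single misaligned block ruins monotonicity on one side. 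On the combinatorial route, no statistic model for $\gamma_k(j)$ is constructed, no injections are defined, and the peak is not located — you yourself flag this as "where I expect the genuine difficulty to concentrate." Since that difficulty is exactly the content of the conjecture, the proposal leaves Conjecture \ref{unimodality} as open as the paper does; it cannot be counted as a proof, correct or otherwise, of the statement.
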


A sequence $a_0,a_1,\dots,a_n$ is called log-concave if $a_i^2\geq a_{i-1}a_{i+1}$ for all $1\leq i\leq n-1$. For a sequence with positive coefficients, log-concavity implies unimodality. It is natural to ask if $Q_k(u)$ has this stronger property of log-concavity. However, we have checked that for $3\leq k \leq 7$, $Q_k(u)$ is not log-concave. For example in $Q_3(u)$ we have $23^2<18\cdot 32$, and $8^2<23\cdot 4$.


\section{Juggling sequences}\label{section juggling}

Since $\mdA(\pi)=\bscA(\pi)$ for all permutations $\pi\in S_n$, we see that $\mathcal{A}_{n,k}$ is the set of all $k$-bubble sortable permutations in $S_n$ (i.e. $\bubA^k(\pi)=\id$ for all $\pi\in\mathcal{A}_{n,k}$). In \cite{ccdg}, the authors also show that there is a bijective correspondence between $\mathcal{A}_{n,k}$, and certain juggling sequences of period $n$.
In this section, we show that there is bijection between $\mathcal{B}_{n,k}$ and a natural 2-colored analog of the aforementioned juggling sequences.

Formally, an $n$-periodic juggling sequence, $T=(t_1,t_2,\dots,t_n)$, is a sequence of $n$ nonnegative integers such that the values $t_i+i \mod n$ are all distinct. One can interpret each $t_i$ as corresponding to a ball being thrown at time $i$ which remains in the air for $t_i$ time units. If $t_i=0$, then no ball is thrown at time $i$. We also think of the pattern defined by $T$ as being repeated indefinitely, so that the juggling sequence is an infinite sequence of period $n$. The condition that the values $t_i+i \mod n$ are distinct guarantees that no two balls will land at the same time. Furthermore, the average of the $t_i$'s, which is an integer, is the number of balls being juggled (see also \cite{begw}, \cite{bg}, \cite{ccdg}, \cite{cg}, \cite{er}).

Next we define the state of a juggling sequence (see also \cite{bg}, \cite{cg}). Suppose the juggler, who has been repeating the $n$-periodic juggling pattern $T$ infinitely many times in the past, stops juggling after the ball corresponding to $t_n$ is thrown. The balls that are in the air will land at various times, and we record the landing schedule as a binary sequence $\sigma=(\sigma_1,\sigma_2,...,\sigma_h)$. If $\sigma_i=1$, then a ball landed $i$ time units after the last ball was thrown. If $\sigma_i=0$, then no ball landed at that time. So $\sigma_i=1$ if and only if there is some $j\in\{1,2,\dots,n\}$ and some $d>0$ such that $t_j+j=i+dn$. Since the tail of the sequence $\sigma$ must be all zeros, we let $\sigma$ be a finite sequence where the last entry is the last 1 appearing in $\sigma$. The state of a juggling sequence is the sequence $\sigma$ determined by the landing schedule. There is a particular state which is called the ground state. If $T$ is an $n$-periodic juggling sequence with $k$ balls being juggled, we say that $T$ has the ground state if its state is $\sigma=(\overbrace{1,1,\dots,1}^{k\text{ times}})$.

\begin{thm}[\cite{ccdg}]\label{ajuggling}
There is a bijection, $\phi$, between permutations in $\mathcal{A}_{n,k}$ and juggling sequences which are $n$-periodic, juggle $k$ balls, and have the ground state. Given $\pi\in\mathcal{A}_{n,k}$, this map is defined by
\[\phi(\pi)=(t_1,t_2,\dots,t_n), \text{ where }t_i=k-i+\pi(i).\]
\end{thm}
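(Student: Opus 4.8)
The plan is to show that the map $\phi(\pi)=(t_1,\dots,t_n)$ with $t_i=k-i+\pi(i)$ is a well-defined juggling sequence of the required type, and that it is a bijection. I would proceed in three stages: first verify that $\phi$ lands in the target set (each $T=\phi(\pi)$ is a genuine $n$-periodic juggling sequence juggling $k$ balls in the ground state), then construct the inverse map explicitly, and finally check the two compositions are the identity.

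\textbf{Well-definedness.} First I would check the three defining properties of the target. For the juggling condition, note $t_i+i=k+\pi(i)$, so the values $t_i+i\pmod n$ are distinct iff the values $\pi(i)\pmod n$ are distinct; since $\pi\in S_n$ is a permutation of $[n]$, this holds. Next, nonnegativity $t_i\ge 0$ means $\pi(i)\ge i-k$, i.e. $i-\pi(i)\le k$ for all $i$, which is exactly the condition $\mdA(\pi)\le k$ defining $\mathcal A_{n,k}$. For the ball count, the average $\tfrac1n\sum_i t_i=\tfrac1n\sum_i(k-i+\pi(i))=k$ since $\sum_i i=\sum_i\pi(i)$, so $T$ juggles $k$ balls. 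The remaining and most delicate point is the ground state: I must show the landing schedule is $(1,1,\dots,1)$ with $k$ ones. A ball lands at time $i$ (after the last throw) iff some $t_j+j=i+dn$ with $d>0$, i.e. $k+\pi(j)=i+dn$. Since $\pi(j)$ ranges over $[n]=\{1,\dots,n\}$, the quantities $k+\pi(j)$ run over $\{k+1,\dots,k+n\}$; reducing the landing-time condition modulo $n$ and accounting for $d>0$, I would show the landing times are precisely $1,2,\dots,k$, giving the ground state. This ground-state verification is where I expect the main obstacle: one has to carefully track which throws are still airborne after time $n$ and confirm no gaps appear among the first $k$ landing slots, which amounts to a residue-counting argument on $\{k+\pi(j)\bmod n\}$.

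\textbf{Inverse and bijectivity.} Given any target juggling sequence $(t_1,\dots,t_n)$, I would define $\pi$ by $\pi(i)=t_i+i-k$ and check $\pi\in\mathcal A_{n,k}$: nonnegativity of $t_i$ forces $\pi(i)\ge i-k$ (bounding the drop by $k$), the distinctness of $t_i+i\bmod n$ forces $\pi$ to be injective modulo $n$, and the ground-state and $k$-ball conditions force the image to be exactly $[n]$ (rather than a shifted residue system), so $\pi$ is a permutation with $\mdA(\pi)\le k$. Since $\pi\mapsto(k-i+\pi(i))_i$ and $(t_i)\mapsto(t_i+i-k)_i$ are inverse affine maps coordinatewise, the compositions are immediately the identity, establishing that $\phi$ is a bijection.

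Because the statement is attributed to \cite{ccdg}, the cleanest exposition may simply invoke their result; but if a self-contained argument is wanted, the structure above suffices. The only genuinely nontrivial ingredient is the equivalence between the ground-state condition on $T$ and the image of $\pi$ being exactly $[n]$, and I would isolate that as the crux, deriving it from the observation that $t_i+i=k+\pi(i)$ converts every combinatorial feature of the juggling sequence into an elementary statement about the permutation $\pi$.
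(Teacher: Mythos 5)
Your proposal is correct, but there is nothing in this paper to compare it against: the paper does not prove Theorem \ref{ajuggling} at all --- it is imported verbatim from \cite{ccdg} as background for the type B construction in Theorem \ref{bjuggling}. So your self-contained argument goes beyond what the paper does, and it is in fact the standard proof (essentially the one in \cite{ccdg} itself). The two points you flag as the crux do close exactly as you predict, and it may reassure you to see how short the closing arguments are. For the ground state: since $t_j+j=k+\pi(j)$ takes each value in $\{k+1,\dots,k+n\}$ exactly once, for each $i$ with $1\le i\le k$ the unique integer $d$ with $i+dn\in\{k+1,\dots,k+n\}$ (which exists because this is a run of $n$ consecutive integers) satisfies $d\ge 1$, so $\sigma_i=1$; while for $i>k$ and $d\ge 1$ one has $i+dn>k+n$, so $\sigma_i=0$. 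Hence the state is $(1,\dots,1)$ with $k$ ones. For the inverse: writing $t_j+j=r_j+d_jn$ with $r_j\in[n]$ and $d_j\ge 0$, distinctness mod $n$ makes $j\mapsto r_j$ a bijection, the ground-state condition forces $d_j\ge 1$ exactly when $r_j\le k$ and $d_j=0$ otherwise, and the $k$-ball condition $\sum_j t_j=kn$ gives $\sum_j d_j=k$, so each of the $k$ nonzero $d_j$ equals $1$; therefore $\{t_j+j\}=\{k+1,\dots,k+n\}$ and $\pi(j)=t_j+j-k$ is onto $[n]$, with $j-\pi(j)=k-t_j\le k$. Your observation that the coordinatewise affine maps are trivially mutually inverse then finishes the bijectivity, so the proposal is complete once these two counting steps are written out.
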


\begin{defn}\label{bjuggling def}
We now define 2-colored (or signed) juggling sequences. Suppose the juggler may initially select $k$ balls to juggle, and there are two colors to choose from for each ball. We encode this as a signed juggling sequence $T=(t_1,t_2,\dots,t_n)$ where each $t_i$ is now an integer. The sign of $t_i$ is the color of the ball being thrown at $t_i$, and it remains in the air for $|t_i|$ time units. We still do not allow more than one ball to land at the same time, so we require that the values $|t_i|+i \mod n$ are all distinct. Furthermore, we do not allow the juggler to change colors once the juggling has started. Therefore, if $T$ has period $n$, we also require that 
\begin{equation}\label{colordef}
\sgn(t_i)=\sgn(t_j),\text{ where }1\leq i\leq n\text{ and }j=i+|t_i| \mod n.
\end{equation}
We will also define $|T|=(|t_1|,|t_2|,\dots,|t_n|)$. Thus $|T|$ is a (1-colored) juggling sequence as described above. We define the state of a 2-colored juggling sequence $T$ to be the state of $|T|$, and we say $T$ juggles $k$ balls if $|T|$ juggles $k$ balls.

Now consider the special case when $T=(t_1,t_2,\dots,t_n)$ is a 2-colored (or 1-colored) juggling sequence, where $T$ has the ground state, juggles $k$ balls, and $k\leq n$. The fact that $T$ has the ground state implies that $t_1,t_2,\dots,t_k$ are all nonzero. Informally, we define the landing permutation of $T$, which we will denote by $\tau$, to be the order in which the first $k$ balls thrown will land if we stop juggling at time $n$. More precisely, for $t_i\neq 0$ we define the landing time of $i$, denoted $\lt(i)$, recursively by
\[\lt(i):=\begin{cases}
i+|t_i| & \text{ if }i+|t_i|>n \\
\lt(i+|t_i|) & \text{ if }i+|t_i|\leq n
\end{cases}.\]
Then 
\[\tau:=\lt(1),\lt(2),\dots,\lt(k).\]

For example, consider the 1-colored juggling sequence $T=(4,6,3,0,2,3,3)$, which juggles 3 balls, has period 7, and has the ground state. We can visualize $T$ with the following diagram:

\[\includegraphics[height=6cm]{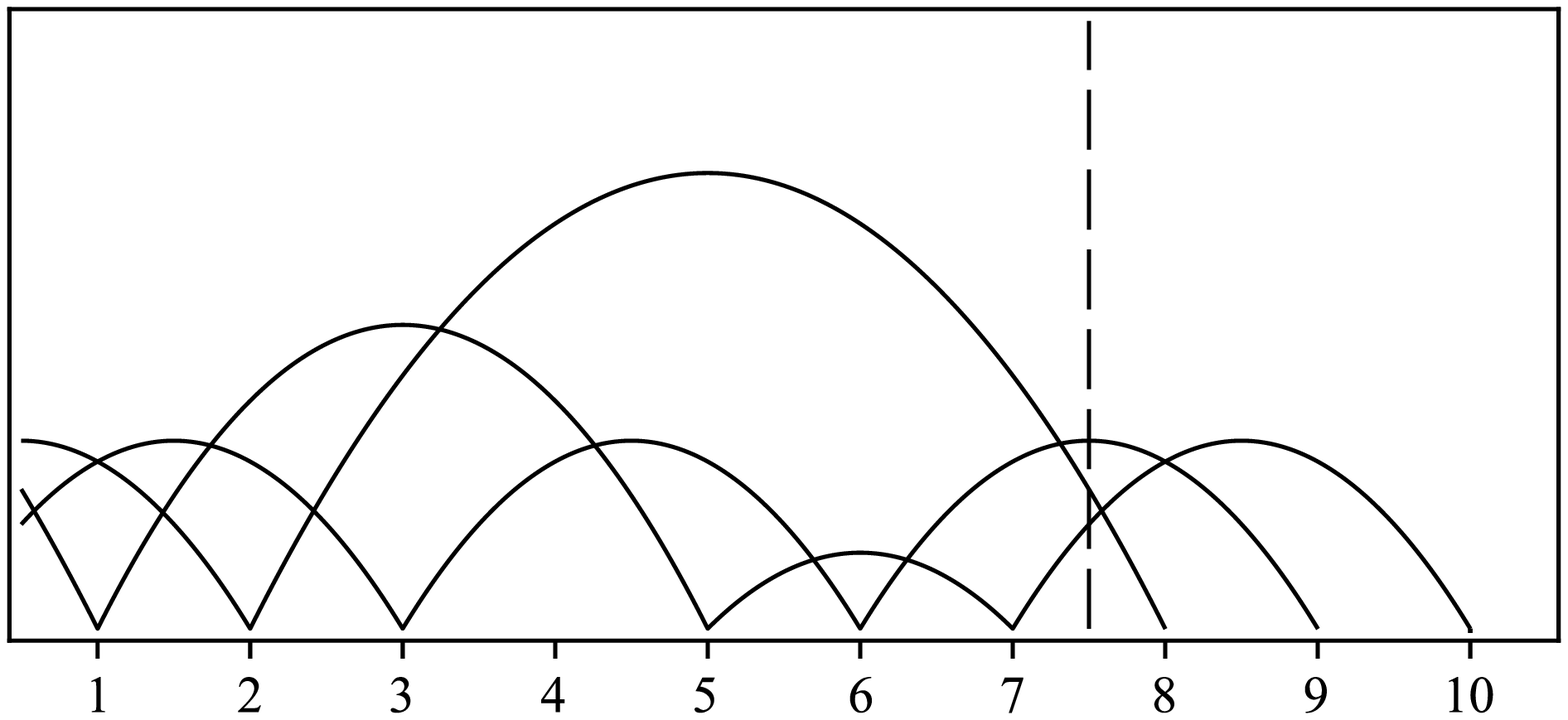}\]

The landing permutation associated to $T$ is $\tau=3,1,2$. This is because for each $i=1,2,3$, if we follow the path of the ball thrown at time $i$, it (eventually) lands at time $n+\tau(i)$.

\end{defn}

\begin{thm}\label{bjuggling}
For $k\leq n$, let $J_{n,k}$ be the set of 2-colored juggling sequences, where $T\in J_{n,k}$ if $T$ is $n(k!)$-periodic, juggles $k$ balls, has the ground state, and $|T|$ is $n$-periodic. There is a bijection $\psi:\mathcal{B}_{n,k}\rightarrow J_{n,k}$.
\end{thm}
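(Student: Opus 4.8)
The plan is to build $\psi$ on top of the type A bijection $\phi$ of Theorem~\ref{ajuggling}, letting the underlying $1$-colored sequence $|T|$ record $|\pi|$ and the colors record the signs of $\pi$. First I would record two structural facts. (i) Every $\pi\in\mathcal{B}_{n,k}$ has $|\pi|\in\mathcal{A}_{n,k}$: when $\pi(i)>0$ the drop $i-|\pi(i)|=i-\pi(i)\le\mdB(\pi)\le k$, and when $\pi(i)<0$ the definition of $\mdB$ gives $\mdB(\pi)\ge i$, forcing $i\le k$ and hence $i-|\pi(i)|<i\le k$; thus $\mdA(|\pi|)\le k$. (ii) Conversely, for $\sigma\in\mathcal{A}_{n,k}$ a choice of signs yields an element of $\mathcal{B}_{n,k}$ if and only if every negative entry sits in a position $\le k$, since then positive positions keep $i-\pi(i)\le k$ while a negative position $i\le k$ has drop size $i\le k$. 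Hence each $\sigma\in\mathcal{A}_{n,k}$ underlies exactly $2^k$ elements of $\mathcal{B}_{n,k}$, the free data being $\sgn(\pi(1)),\dots,\sgn(\pi(k))$.

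Given $\pi$, I would set $|T|:=\phi(|\pi|)$ and extend it to period $N=n(k!)$ by $u_{x+n}=u_x$, where $u_x=k-x+|\pi|(x)$; the throw dynamics $x\mapsto x+u_x$ on $\mathbb{Z}/N$, restricted to the throw times ($u_x\ne0$), partition them into orbits, and I would color each throw time by a sign attached to its orbit. The heart of the argument is to show that over this period the throw times split into exactly $k$ orbits, each meeting $\{1,\dots,k\}$ in a single point. I would fix $i\in[k]$ and track the ball thrown there: by the landing permutation $\tau\in S_k$ of Definition~\ref{bjuggling def}, its throw times return to phase $i$ (same residue mod $n$) after every $o_i$ periods, where $o_i$ is the length of the $\tau$-cycle through $i$; since $o_i\le k$ divides $k!$, the trajectory returns to $i$ itself only after advancing by exactly $(k!)n=N$. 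As these throw times increase monotonically from $i$ to $i+N$, the orbit crosses $N$ exactly once, i.e.\ wraps around $\mathbb{Z}/N$ once. Moreover $i+u_i=k+|\pi|(i)>k$, so the orbit leaves $[1,k]$ on its first step and, wrapping only once, comes back below $i$ only at the closing step $i+N\equiv i$; hence it meets $\{1,\dots,k\}$ precisely in $i$. This produces $k$ distinct orbits, one per $i\in[k]$, each of total height $N$. Since an orbit of total height $cN$ contributes $c\ge1$ and the heights sum to $\sum_{x=1}^N u_x=(k!)\sum_{i=1}^n u_i=(k!)(kn)=kN$, these $k$ orbits exhaust all throw times.

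With this in hand the bijection follows. I would define $\psi(\pi)=T$ by $|t_x|=u_x$ and $\sgn(t_x)=\sgn(\pi(i))$, where $i$ is the unique element of $[k]$ in the orbit of $x$. By construction $|T|=\phi(|\pi|)$ is the required $n$-periodic, ground-state, $k$-ball juggling sequence; signs are constant on orbits, so the color condition \eqref{colordef} holds at period $N$; and $T$ is $N$-periodic, giving $T\in J_{n,k}$. The inverse sends $T$ to the $\pi$ with $|\pi|=\phi^{-1}(|t_1|,\dots,|t_n|)$, $\sgn(\pi(i))=\sgn(t_i)$ for $i\in[k]$ (well defined since the ground state forces $t_1,\dots,t_k\ne0$), and $\pi(i)>0$ for $i>k$; fact (ii) gives $\pi\in\mathcal{B}_{n,k}$, and the orbit correspondence together with the bijectivity of $\phi$ shows the two maps are mutually inverse. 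The main obstacle is the orbit count of the middle paragraph: establishing that $N=n(k!)$ is precisely the period at which every ball's trajectory closes after a single wrap, so that the $k$ orbits correspond bijectively to the positions $1,\dots,k$ and the $2^k$ colorings match the $2^k$ valid signings. Everything else is bookkeeping layered on Theorem~\ref{ajuggling}.
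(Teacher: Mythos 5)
Your proposal is correct and takes essentially the same route as the paper: both constructions set $|T|$ equal to $k!$ concatenated copies of $\phi(|\pi|)$, color each throw by the sign of the unique $\pi(i)$, $i\in[k]$, lying on its trajectory, and reduce the wrap-around color condition \eqref{colordef} to the fact that the landing permutation $\tau\in S_k$ has order dividing $k!$, so every ball's trajectory closes only after the full period $N=n(k!)$. Your middle paragraph (the orbit count together with the height-sum argument showing the $k$ orbits exhaust all throw times) is just a more explicit verification of what the paper records in the single line $\tau'(i)=\tau^{(k!)}(i)=i$.
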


\begin{proof}

First we describe $\psi$, so let $\pi\in\mathcal{B}_{n,k}$. Recall that $|\pi|=|\pi(1)|,\dots,|\pi(n)|\in S_n$. Moreover, we have $|\pi|\in\mathcal{A}_{n,k}$ since all drop sizes decrease or stay the same if we remove the signs. Define a 1-colored juggling sequence $S=(s_1,\dots,s_{n(k!)})$ using the map $\phi$ from Theorem \ref{ajuggling}:
\[S=\overbrace{\phi(|\pi|)*\dots *\phi(|\pi|)}^{k!\text{ times}},\]
where $*$ denotes concatenation. Note that theorem \ref{ajuggling} guarantees that $S$ is a juggling sequence which juggles $k$ balls and has the ground state. We construct $\psi(\pi)=T=(t_1,\dots,t_{n(k!)})$ by assigning signs to each $s_i$ for $1\leq i\leq n(k!)$, so $|T|=S$. First we set 
\begin{equation}\label{sign1}
\sgn(t_i)=\sgn(\pi(i)) \text{ for }1\leq i\leq k.
\end{equation}
We then assign signs recursively by
\begin{equation}\label{sign2}
\sgn(t_{i+|t_i|})=\sgn(t_i)\text{ for }k+1\leq i+t_i \leq n(k!).
\end{equation}

Since $|T|=S$, it is clear (using Theorem \ref{ajuggling}) that $T\in J_{n,k}$ if we can show that $T$ satisfies \eqref{colordef}. Since $T$ has the ground state, a ball thrown at time $i$ where $1\leq i\leq k$, lands no sooner than time $k+1$, thus \eqref{sign1} is well defined. Since no two balls land at the same, the assignments from \eqref{sign2} are also well defined. Also, the fact that $T$ has the ground state guarantees that each nonzero $s_i$ is assigned a sign. It remains to show that
\begin{equation}\label{ltime proof}
\sgn(t_i)=\sgn(t_j),\text{ where }1\leq i\leq n\text{ and }j=i+|t_i|>n.
\end{equation}
Again, the fact that $T$ has the ground state ensures that $j\leq n+k$, therefore \eqref{ltime proof} will be satisfied if we can show that the landing permutation associated to $T$ is the identity permutation. Indeed, let $\tau\in S_k$ be the landing permutation associated to $\phi(|\pi|)$, and let $\tau'$ be the landing permutation associated to $T$. Since $|T|$ is just $k!$ copies of $\phi(|\pi|)$, it follows that
\[\tau'(i)=\tau^{(k!)}(i)=i.\]
Thus $\tau'=\id$ as desired, and $\psi$ is well defined.

For example, consider $\pi=4,-2,1,3 \in\mathcal{B}_{4,2}$. Then $\phi(|\pi|)=(5,2,0,1)$, and
\[S=(5,2,0,1,5,2,0,1).\]
Since only the first $k$ letters of $\pi$ may be negative, we essentially use the first $k$ letters of $\pi$ to determine the colors of the balls being juggled. Thus
\[\psi(\pi)=(+5,-2,0,-1,-5,+2,0,+1).\]
We can visualize this juggling sequence with the following diagram:

\[\includegraphics[height=6cm]{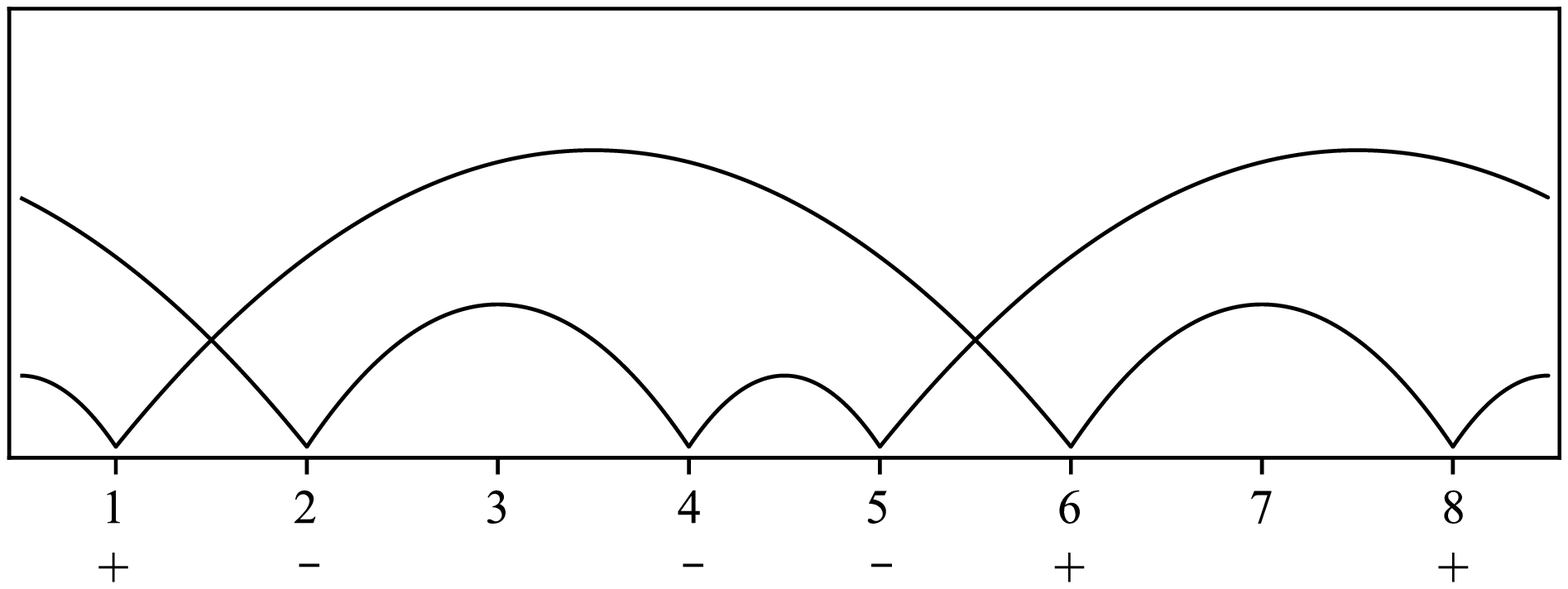}\]

The inverse map $\psi^{-1}$ is straightforward. Given $T\in J_{n,k}$, let 
\[\sigma=\phi^{-1}(|t_1|,\dots,|t_n|)\in\mathcal{A}_{n,k}.\]
Then 
\[\psi^{-1}(T)=\sgn(t_1)\sigma(1),\dots,\sgn(t_k)\sigma(k),\sigma(k+1),\dots,\sigma(n).\]
Since only the first $k$ letters of $\psi^{-1}(T)$ may be negative, it follows that $\psi^{-1}(T)\in\mathcal{B}_{n,k}$. Moreover, it is clear that $\psi^{-1}$ is in fact the inverse of $\psi$.

\end{proof}

\section{Acknowledgments}
Thanks to Jeff Remmel for useful discussions and comments.

\end{document}